\newtheorem{thm}{Theorem}[section]
\newtheorem{lem}[thm]{Lemma}
\newtheorem*{proposition}{Proposition}
\theoremstyle{definition}
\newtheorem{defin}[thm]{Definition}
\newtheorem{rem}[thm]{Remark}
\numberwithin{equation}{section}
\newcommand{\N}{\mathbb{N}}
\newcommand{\R}{\mathbb{R}}
\newcommand{\C}{\mathbb{C}}
\newcommand{\Z}{\mathbb{Z}}
\newcommand{\Q}{\mathbb{Q}}
\newcommand{\T}{\mathbb{T}}
\newcommand{\vektor}[1]{\mathbf{#1}}
\newcommand{\av}{\vektor{a}}
\newcommand{\bv}{\vektor{b}}
\newcommand{\dv}{\vektor{d}}
\newcommand{\fv}{\vektor{f}}
\newcommand{\nv}{\vektor{n}}
\newcommand{\mv}{\vektor{m}}
\newcommand{\xv}{\vektor{x}}
\newcommand{\yv}{\vektor{y}}
\newcommand{\hv}{\vektor{h}}
\newcommand{\zv}{\vektor{z}}
\newcommand{\vv}{\vektor{v}}
\newcommand{\wv}{\vektor{w}}
\newcommand{\bev}{\boldsymbol{\beta}}
\newcommand{\gamv}{\boldsymbol{\gamma}}
\newcommand{\muv}{\boldsymbol{\mu}}
\newcommand{\nuv}{\boldsymbol{\nu}}
\newcommand{\nullv}{\boldsymbol{0}}
\newcommand{\einsv}{\boldsymbol{1}}
\newcommand{\Ma}{{\mathfrak M}}
\newcommand{\Ua}{{\mathfrak U}}
\begin{document}

\title[Translation invariant quadratic forms]{Translation invariant quadratic forms \\ in dense sets}

\author[Eugen Keil]{Eugen Keil}

\address{Mathematical Institute \\ University of Oxford\\
Mathematical Institute \\ 24-29 St Giles'\\ OX1 3LB Oxford \\ United Kingdom}

\email{Eugen.Keil@maths.ox.ac.uk}

\date{\today}

\subjclass[2010]{Primary 11B30; Secondary 11P55, 11D09} 
\keywords{dense set \and circle method \and quadratic form \and translation invariant}

\maketitle

\begin{abstract}
We generalize Roth's theorem on three term arithmetic progressions to translation invariant quadratic forms in
at least $17$ variables. We use Fourier-analysis, restriction theory, uniformity norms and 
Roth's density increment method to show quantitative estimates for subsets of the integers 
without any non-trivial solutions.
\end{abstract}

\section{Introduction}

In 1953 Roth \cite{Roth} proved his theorem on $3$-term arithmetic progressions in dense sets.
It states that a subset $\mathcal{A} \subset \{1,2,\ldots,N\}$ with no arithmetic
progresions of the form $x,x+h,x+2h$ with $h \geq 1$ cannot be too large. His theorem 
gives the bound $|\mathcal{A}| \leq CN (\log \log N)^{-1}$ for some constant $C \geq 1$.
In other words, it is not possible to avoid $3$-term arithmetic progressions as long as the density 
of the set $\mathcal{A}$ is big enough.

Arithmetic progressions can also be described as solutions to translation invariant 
equations (see explanation at the end of the introduction). 
In the case of $3$-term progressions we have the equation $x_1-2x_2+x_3 = 0$.
Roth \cite{Roth2} went on to prove a version of his theorem for solutions to translation invariant 
linear systems in $k$ equations with at least $2k+1$ variables. 
By recent work of Gowers \cite{Gow} we can now solve 
translation invariant systems with as few as $k + 2$ variables in sets $\mathcal{A}$ of cardinality 
at least $C_kN (\log\log N)^{-c_k}$ for some $C_k,c_k > 0$.

The aim of this work is to combine the ideas of Gowers \cite{Gow}, Green \cite{Gr},
Liu \cite{Liu}, Roth \cite{Roth} and the previous work of the author \cite{Keil} to give 
a version of Roth's theorem for quadratic forms.

\begin{thm} \label{Thm1}
Let $\xv^TQ\xv = 0$ be a translation invariant quadratic equation in $s \geq 17$ variables. 
Assume that it has a non-singular real solution, but only trivial solutions when the variables
are restricted to $\mathcal{A} \subset \{1,2,\ldots,N\}$. 
Then $|\mathcal{A}| \leq C_Q N (\log\log N)^{-c}$
for some $c,C_Q > 0$.
\end{thm}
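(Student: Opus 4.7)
The plan is to implement Roth's density increment iteration driven by a circle-method analysis tailored to quadratic forms. Writing $\alpha = |\mathcal{A}|/N$ and $f = 1_{\mathcal{A}}$, I would start from the identity
\[
T(\mathcal{A}) = \int_0^1 S(\theta)\, d\theta, \qquad S(\theta) = \sum_{\xv \in \mathcal{A}^s} \e{\theta \, \xv^T Q \xv},
\]
which counts solutions of the equation inside $\mathcal{A}^s$. Translation invariance forces $Q\einsv = \nullv$, so the ``trivial'' solutions supported on diagonal-type subspaces contribute only $O(N^{s-2})$, and this is the threshold $T(\mathcal{A})$ must not exceed under the hypothesis. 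The goal is then to show that for $\alpha$ not too small one either obtains enough non-trivial solutions to violate this bound or produces a long arithmetic progression on which the relative density of $\mathcal{A}$ exceeds $\alpha + c\alpha^2$; iterating the latter alternative $O(\alpha^{-1})$ times yields the claimed $\alpha \ll (\log\log N)^{-c}$ estimate.

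Next I would perform a Hardy--Littlewood dissection $[0,1] = \Ma \cup \ma$, with $\Ma$ a union of narrow arcs around rationals of denominator $\leq Q_0$, and decompose $f = \alpha 1_{[N]} + g$ so that $S(\theta)$ expands multilinearly into $2^s$ pieces. On $\Ma$ the pure $\alpha 1_{[N]}$ piece produces the expected main term $c_Q \, \sigma_\infty \, \Sing(N) \, \alpha^s N^{s-2}$; positivity of the singular series and singular integral follows from the non-singular real solution together with the standard $p$-adic analysis available once $s \geq 17$, so this main term dominates the trivial count whenever $\alpha \gg (\log\log N)^{-c}$. The core analytic task is to control every ``mixed'' piece in the expansion, i.e.\ every term involving at least one copy of $g$.

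For the minor arcs I would rely on a restriction estimate for quadratic exponential sums weighted by $g$, adapting the Bourgain--Green framework used in \cite{Keil}: viewing $S(\theta)$ as a product of phases extracted from $\xv^T Q \xv$, one establishes an $\ell^p \to L^{p'}$ restriction inequality on $\ma$ which, combined with Hölder and $\|g\|_2 \leq (\alpha N)^{1/2}$, bounds each mixed minor-arc contribution by $o(\alpha^s N^{s-2})$ provided $s \geq 17$. The major-arc mixed pieces are handled by approximating $\widehat{g}$ on each arc and again invoking restriction. If $T(\mathcal{A})$ still falls below the main term, some mixed piece must be large, and tracing through the inequalities forces $\|g\|_{U^3} \gg \alpha^{C}$---the $U^3$ norm arising because the controlling phase is quadratic in $\xv$. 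The Gowers--Green--Tao inverse theorem then supplies a quadratic phase $\e{\varphi(n)}$ correlating with $g$; linearising $\varphi$ by pigeonholing on its discrete derivatives yields a sub-progression of length $N^{\Omega(1)}$ on which $\mathcal{A}$ has density at least $\alpha + c\alpha^2$, closing the loop.

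The main obstacle will be the minor-arc restriction step. Weyl-type pointwise bounds would cost a factor of $N^\varepsilon$ and wreck the density-increment iteration, so the argument has to rest on a genuine $L^p$ restriction theorem for subsets of the integers twisted by quadratic phases, executed cleanly at the threshold $s = 17$ rather than with room to spare. A secondary difficulty is running the increment with a quadratic rather than linear obstruction: the sub-progression on which the density of $\mathcal{A}$ increases must be extracted through the $U^3$ inverse theorem, which is substantially more delicate, and produces weaker quantitative parameters, than the single-Fourier-coefficient extraction used in Roth's original argument.
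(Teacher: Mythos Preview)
Your high-level architecture --- circle method, expand $1_{\mathcal{A}} = \delta 1_{[N]} + g$, compare main term to error terms, extract a density increment and iterate --- matches the paper. But two of your key steps diverge from what the paper does, and in each case your version has a real gap.

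First, the claim that a large mixed term forces $\|g\|_{U^3} \gg \alpha^C$ is unsupported. The counting form here is $\int_0^1 \sum_{\xv} g_1(x_1)\cdots g_s(x_s)\,e(\theta\,\xv^TQ\xv)\,d\theta$, and there is no off-the-shelf generalized von Neumann inequality bounding this by a $U^3$ norm of one factor; the Green--Tao machinery applies to systems of \emph{linear} forms, not to a single quadratic one. The paper never attempts this. Instead it performs a structural case analysis on the \emph{off-diagonal rank} $r$ of $Q$: for $r\ge 5$ a bilinear Cauchy--Schwarz in the style of Liu yields a sharp pointwise bound $|S_g(\alpha)|\ll N^{s-r}K(\alpha)^r$ and then an $L^p$ estimate; for $r\le 4$ one partially diagonalises $Q$ into a diagonal quadratic plus at most four auxiliary linear equations, and runs a convexity argument on the resulting mean values (this is where the $L^p$ restriction estimates for the one-variable sum $V_g(\alpha,\beta)$ from \cite{Keil} actually enter). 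In every non-degenerate branch the output is not a $U^3$ lower bound but the much more explicit statement
\[
\sup_{\alpha,\beta}\Big|\sum_{n\le N} f(n)\,e(\alpha n^2+\beta n)\Big|\ \gg\ \delta^{C} N,
\]
i.e.\ $f$ already correlates with a global quadratic phase. This is precisely the \emph{conclusion} of the $U^3$ inverse theorem, obtained directly without invoking it; Lemma~B.1 of \cite{Keil} then converts this into a density increment $\delta\mapsto\delta+c\delta^{C}$ on a sub-progression of length $\gg \delta^{2C}N^{1/16}$. So your proposed detour through $U^3$ is both unjustified at the input end and unnecessary at the output end.

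Second, your minor-arc plan (``restriction inequality on $\mathfrak m$ combined with H\"older'') is too vague to carry the weight you put on it, and the paper does not proceed this way. The decisive minor-arc input is the pointwise Weyl-type bound coming from the off-rank structure together with the sharp Vinogradov-type lemma in Appendix~A; restriction theory is used only for the auxiliary one-variable sums, not for $S_g$ itself. Without the off-rank dichotomy you have no mechanism to beat the $N^\varepsilon$ losses you yourself flag as fatal. Finally, two smaller corrections: the trivial solutions contribute $\delta N$, not $O(N^{s-2})$, to $T(\mathcal{A})$; and the density increment obtained is $\delta+c\delta^{136}$ rather than $\alpha+c\alpha^2$, which changes the arithmetic of the iteration (though still yields a $(\log\log N)^{-c}$ bound). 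In the genuinely degenerate sub-case where a translation-invariant \emph{linear} system can be extracted, the paper does fall back on Gowers norms --- but $U^w$ for the linear system, via Theorem~18.5 of \cite{Gow}, not $U^3$ for the quadratic.
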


Theorem \ref{Thm1} will follow from the more precise Theorem \ref{Main-thm}. 
In most cases we only need $s \geq 10$ variables as in the work of Liu \cite{Liu}. 
The bound $s \geq 17$ is a worst case scenario and can certainly be improved by a more
complicated analysis.

The next observation is that the exponent $c$ in $C_Q N(\log\log N)^{-c}$
is independent of the quadratic form $Q$ and the number of variables $s$ (we use 
the letter $Q$ interchangeably for the quadratic form and the underlying matrix). 
If one would allow such a dependence, it is possible to derive the above theorem 
(even for all $s \geq 5$) by the methods of Gowers \cite{Gow} as follows. 
Take any integer $\yv$ with $Q(\yv) = 0$ (see Lemma \ref{Standard-quad-result}) and consider the 
patterns $(a+qy_1,\ldots,a+qy_s)$ for $a \in \Z$ and $q \in \N$.
Then the approach in \cite{Gow} shows that a set that doesn't contain
any of those patterns, must have a density bounded by $C_{Q} N(\log\log N)^{-c_s}$.

We want to point out that it is very likely that the methods of this work 
can be adapted to give an asymptotic count of the number of solutions to $Q(\xv) = 0$ for 
$x_i \in \mathcal{B}$ for some relatively structured set 
$\mathcal{B}$ such as the primes, for example. This is not possible by relying on
the work of \cite{Gow} or other purely additive combinatorial results from the linear theory.
This explains some of the motivation behind Theorem \ref{Thm1}.


Recent years have seen huge advances in our understanding of linear equations in primes.
Work of Green \cite{Gr} and Green and Tao \cite{GrTao2} introduced the concept of a 
`pseudorandom measure', which led to amazing new developments in the linear theory \cite{GrTao3}.
These results can be used to find prime solutions for general diophantine equations, 
such as in recent work of Br\"udern, Dietmann, Liu and Wooley \cite{BDLW} on the 
Birch-Goldbach problem.

If one is interested in asymptotics, on the other hand, one has to deal with the
non-linear theory directly.
Recent work on prime solutions for quadratic forms by Liu \cite{Liu} uses
a variant of the circle method to deal with a large class of quadratic forms 
in at least ten variables and provides one of the main ideas for this work.

Previous work on diagonal translation invariant forms was carried out 
by Smith \cite{Smith}, who considers the system
\begin{align*}
\lambda_1 x_1^2 + \lambda_2x_2^2 + \ldots + \lambda_s x_s^2 & = 0, \\
\lambda_1 x_1 + \lambda_2x_2 + \ldots + \lambda_s x_s & = 0
\end{align*}
with $\lambda_1 + \ldots + \lambda_s = 0$ in $s \geq 9$ variables. 
The author simplified Smith's approach in \cite{Keil} and reduced 
the number of variables down to $s \geq 7$. The methods of \cite{Keil} play a significant
role in the development of this work and we cite several results from \cite{Keil} 
to simplify the exposition here. Readers interested in the restriction theory part
of the argument are adviced to have a look at \cite{Keil} for more explanations. 

Before proceeding to the main part of the paper, we want to give the reader
the opportunity to gain some intuition about the consequences of assuming 
`translation invariance' in the context of quadratic forms. 
For linear equations, this geometric condition translates
into the arithmetic statement that the sum of the coefficients in each equation is zero.
This is also true for the diagonal quadratic system considered above.
For a quadratic form $Q(\xv) := \xv^T Q \xv = 0$ with a symmetric matrix $Q \in \Z^{s \times s}$ 
\emph{translation invariance} means that $Q(\xv + h\einsv) = Q(\xv)$ for all
$\xv \in \Z^s$ and $h \in \Z$, where $\einsv = (1,\ldots,1)^T \in \Z^s$. 

This implies that $Q(h\einsv) = Q(\nullv) = 0$. 
We call the multiples of $\einsv$ the \emph{trivial solutions} to our quadric.
If we expand $Q(\xv + h\einsv)$, we obtain
\begin{align*}
Q(\xv) = Q(\xv + h\einsv) = Q(\xv) + 2h\xv^T Q \einsv + h^2Q(\einsv).
\end{align*}
It follows that $Q\einsv = \nullv$ and it is easy to check that it is a sufficient condition as well.

Another way of looking at this issue is to set $h = -x_s$. Then we get
$Q(\xv - x_s \einsv) = Q(\xv)$ and, therefore, any translation invariant quadratic form can be written in the form
$Q'(x_1-x_s,\ldots,x_{s-1}-x_s)$ for some arbitrary quadratic form $Q'$ in $s-1$ variables.

To prove Theorem \ref{Thm1}, it is not always necessary to assume translation invariance,
as can be seen from considering only the first equation from the diagonal
quadratic system above, but the condition $Q(\einsv) = 0$ is clearly necessary.
Otherwise, we can choose $\mathcal{A}$ as the set of numbers congruent to
one modulo $n$, where $n$ is some large number (dependent only on the coefficients of $Q$)
and obtain a contradiction.

{\bf Acknowledgements:}\\
This work is part of the author's Ph.D. thesis and he would like to thank his 
supervisor Trevor Wooley for proposing the topic and constant encouragement.
The doctoral studies of the author were partially supported by the EPSRC.

\section{Notation and General Discussion} \label{Not-Out}

First we remind the reader about some standard notation.
We write $e(x) = \exp(2\pi i x)$ and use $f = O(g)$ to express that $|f| \leq Cg$ for 
some constant $C > 0$ and similarly Vinogradov's notation $f \ll g$. We indicate dependencies 
on parameters by subscripts as in $O_p(N^s)$ or $\ll_{P, \epsilon}$, for example.
The parameter $N \in \N$, governing the size of the variables $x_i$ should be thought of as large
and we write $[1,N]$ as abbreviation for the interval $\{1,2,\ldots,N\}$.
The set $\mathcal{A}$ is always a subset of $[1,N]$ with density 
$\delta = |\mathcal{A}|/N$ and indicator function $1_{\mathcal{A}}$.
The balanced function $f(x) = 1_{\mathcal{A}}(x) - \delta$ plays an important role 
at various places in this paper.

Bold face letters such as $\xv$ denote vectors with components $x_i$ and
inequalities such as $\xv \leq N$ or $\xv \leq \yv$ should be understood componentwise.
A sum over natural numbers starts at one, if not otherwise indicated.
The symbol $\T$ is used to refer to the `circle' $\R/\Z$ with the circle norm
$\|\alpha\| := \min\{|\alpha - z|: z \in \Z\}$,
the distance of $\alpha \in \R$ to the nearest integer.

We don't want to distinguish between quadratic forms that are related
by a simple renaming of variables. Given two matrices $A$ and $B$ in $\R^{s \times s}$ 
we say they are \emph{permutation-equivalent} if 
\begin{align*}
A = P^TBP
\end{align*}
for an invertable matrix $P \in \{0,1\}^{s \times s}$.

To explain the general structure of the work and the main theorem, we consider
the following property of quadratic forms.

\begin{defin}[Off-diagonal rank]
For a symmetric matrix $Q \in \R^{s \times s}$ we consider matrices $M$
that are permutation equivalent to $Q$ and write them in the form
\begin{align*}
M = \begin{pmatrix} 
  A  & B\\ 
  B^T & C  
\end{pmatrix}
\end{align*}
for some matrices $A,B$ and $C$. Then the \emph{off-rank} $r$ of $Q$ is defined as 
\begin{align*}
r = \max\{\mbox{rank}(B): M \mbox{ is permutation equivalent to } Q \}.
\end{align*}
In other words, this is the maximal rank of a submatrix in Q, that doesn't
contain any diagonal elements.
\end{defin}

The off-rank $r$ of a matrix determines the treatment of the corresponding quadratic equation. 
While for $r \geq 5$ we can apply the bilinear sum method 
inspired by the work of Liu \cite{Liu} on prime solutions for quadratic forms, 
we need a more complicated approach for $r \leq 4$, based on 'partial diagonalisation'.
This leads to the following main result of this work.

\begin{thm} \label{Main-thm}
Let $Q \in \Z^{s \times s}$ be symmetric with $Q \cdot \einsv = \nullv$ and off-rank~$r$.
Assume that $Q(\xv) = 0$ has a non-singular real solution and assume that 
$s \geq 5 + 3r$ for $1 \leq r \leq 4$ and $s \geq 10$ for $r \geq 5$. 
If there are only trivial solutions, when the variables
are restricted to $\mathcal{A} \subset [1,N]$, then $|\mathcal{A}| \ll_Q N (\log\log N)^{-c}$
for some $c > 0$ independent of $Q$.
\end{thm}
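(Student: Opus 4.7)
The plan is to follow the classical density increment strategy of Roth, adapted to the quadratic setting using the Hardy--Littlewood circle method. Write the count of solutions as
\begin{align*}
T(\mathcal{A}) = \sum_{\xv \in [1,N]^s} 1_{\mathcal{A}}(x_1)\cdots 1_{\mathcal{A}}(x_s) \int_0^1 \e{\alpha Q(\xv)} \, d\alpha,
\end{align*}
and decompose $1_{\mathcal{A}} = \delta \cdot 1_{[1,N]} + f$ into constant plus balanced part. Expanding produces a main term $\delta^s \cdot T([1,N])$ together with $2^s - 1$ error terms, each containing at least one factor of $f$. If $\mathcal{A}$ has only trivial solutions, then $T(\mathcal{A})$ equals the trivial contribution $O(\delta N)$, so provided the main term dominates, the error terms must be unusually large.

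First I would handle the main term by a circle method decomposition into major and minor arcs. On the major arcs, the assumption that $Q(\xv)=0$ has a non-singular real zero, together with $Q\einsv = \nullv$ (which guarantees local solvability $p$-adically via the trivial points), yields via standard singular series and singular integral machinery (Lemma \ref{Standard-quad-result} is invoked here) an asymptotic $C_Q \delta^s N^{s-2}$ with $C_Q > 0$. On the minor arcs, one needs a Weyl-type estimate for $\sum_\xv \e{\alpha Q(\xv)}$; this is where the off-rank splits the argument. For $r \geq 5$, the existence of a $5 \times 5$ off-diagonal block provides enough bilinear structure to run the bilinear sum method of Liu \cite{Liu} and obtain a power saving with $s \geq 10$; for $r \leq 4$ one performs a partial diagonalisation (peeling off variables until the remaining form is diagonal modulo a small rank piece), at which point one invokes the restriction-theoretic framework of \cite{Keil} — this explains the worse count $s \geq 5 + 3r$, where three variables are consumed per off-diagonal contribution.

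Next I would bound the error terms involving $f$. After substituting the Fourier transform, each error term becomes an integral of a product of exponential sums $\widehat{f}(\cdot)$ and $\widehat{1_{[1,N]}}(\cdot)$ twisted by the $Q$-kernel. Using restriction estimates for $f$ (an $L^p$ inequality for Fourier transforms of dense subsets of $[1,N]$, as developed in \cite{Keil}) together with the minor arc bounds above, one obtains the dichotomy: either every such error term is negligible compared with the main term (contradicting our assumption of trivial solutions), or else some Fourier coefficient $\widehat{f}(\alpha_0)$ is unusually large on the major arcs. Standard localisation around the rational approximation to $\alpha_0$ then yields a long arithmetic progression $P \subset [1,N]$ of length $N' \geq N^{c_1}$ on which $\mathcal{A}$ has density at least $\delta(1 + c_2)$.

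Finally I would iterate: rescaling $P$ back to $[1,N']$, translation invariance ensures that the restricted set still avoids non-trivial solutions of the same $Q$, so the argument may be repeated. Each step multiplies $\delta$ by $(1 + c_2)$ while shrinking $N$ polynomially, so after $O(\log(1/\delta))$ iterations the density must exceed $1$; back-solving yields $\delta \ll_Q (\log\log N)^{-c}$, with $c$ independent of $Q$ and $s$ since the geometric increment factor $c_2$ does not degrade with $s$. The hardest part will be achieving uniform (in $s$ and $Q$) quantitative control in the $r \leq 4$ regime, since the partial diagonalisation procedure risks introducing $Q$-dependent losses into the density increment step that would spoil the uniformity of the exponent $c$; reconciling this with the restriction estimates of \cite{Keil} while only spending the minimal $3$ variables per off-rank unit is the principal technical obstacle.
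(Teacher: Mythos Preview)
Your outline captures the broad shape of the argument (balanced function, main term via Lemma~\ref{Standard-quad-result}, error terms forcing a large correlation, density increment, iterate), but two steps are genuinely wrong or missing.

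First, the density increment you describe is too strong and internally inconsistent. The large correlation one actually extracts is not a linear Fourier coefficient $\widehat{f}(\alpha_0)$ but a \emph{quadratic} one, namely $\sup_{\alpha,\beta}|V_f(\alpha,\beta)| = \sup_{\alpha,\beta}\bigl|\sum_x f(x)e(\alpha x^2+\beta x)\bigr| \gg \delta^{C}N$ for some absolute exponent $C$ (in the paper $C=136$ after reducing the $S_{f_i}$, $T_g$, $L_f$ cases to this one). Passing from a large quadratic correlation to a progression costs a Weyl-differencing step (Lemma~\ref{exp-prog}), and the resulting increment is \emph{additive} of size $c\,\delta^{C}$ on a progression of length only $\gg \delta^{2C}N^{1/16}$. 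It is not a fixed multiplicative factor $(1+c_2)$; if it were, your iteration of $O(\log(1/\delta))$ steps would give a single-log bound $\delta \ll (\log N)^{-c}$, not the double-log bound you claim to recover. The correct iteration runs for $O(\delta^{-C})$ steps, and it is precisely this polynomial-in-$\delta^{-1}$ iteration count against the polynomial shrinkage of $N$ that produces $\delta \ll (\log\log N)^{-1/C}$.

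Second, you omit an entire branch of the $r\le 4$ analysis. The partial diagonalisation of Section~\ref{PD} does not always terminate in a system amenable to the restriction estimates of \cite{Keil}: when the columns attached to the purely linear variables in \eqref{Main-eq-sys} are linearly dependent, or when the diagonalised form \eqref{diagonal-equation} has at most four non-zero coefficients, one is forced into a genuinely linear translation-invariant subsystem in at most $v+1 \le d+2$ variables. These cases (Section~\ref{Linsystem}) are handled not by restriction theory but by Gowers uniformity norms and Theorem~\ref{Gow-thm}; this is where the absolute constant $c$ takes its worst value $2^{-2^{15}}$. Your proposal's reliance on ``restriction estimates for $f$'' alone would leave these degenerate cases untreated.
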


\emph{Remark.}
Almost all quadratic forms in at least 10 variables have off-rank at least five. 
The exceptional cases lie in a lower-dimensional submanifold, meaning that we need
only $s \geq 10$ for almost all quadratic forms.\\

Apart from the off-rank problem, there is the general issue of positive
definiteness that needs to be addressed in the case of quadratic forms.
We saw in the introduction that any translation invariant quadratic form can be written in
the form $Q(\xv) = Q(\xv - x_s\einsv) = Q'(x_1-x_s,\ldots,x_{s-1}-x_s)$, where the matrix 
for the quadratic form $Q'$ is given by the upper left submatrix of size $(s-1)\times(s-1)$ in $Q$.
Now we can diagonalize $Q'(z_1,\ldots,z_{s-1}) = 0$ over $\Z$, where $z_i = x_i - x_s$.
This can be done by completing the square successively and then multiplying by a 
suitable integer to ensure that the rational coefficients that appear during this
process become integers again. We get for some $\lambda_j \in \Z$ an equation of the form
\begin{align} \label{diagonal-equation}
\lambda_1 y_1^2 + \ldots + \lambda_{s-1} y_{s-1}^2 = 0,
\end{align}
where the $y_i$ are independent linear forms in the variables $z_i$ or equivalently,
translation invariant linear forms in the variables $x_1,\ldots,x_s$.
If we have $\lambda_i \geq 0$ for all $1 \leq i \leq s$, then all real solutions to this
quadric are singular and this case is excluded by the assumptions in Theorem \ref{Main-thm}.
This is also true for the case when $\lambda_i \leq 0$ for all $1 \leq i \leq s$.

In the remaining cases we have at least one negative and at least one
positive coefficient $\lambda_i$. The existence of coefficients of both signs 
is equivalent to the existence of a non-singular real solution.
If the number $t$ of non-zero coefficients $\lambda_i$ is less than five, 
we can get problems with $p$-adic solubility as in the example
\begin{align*}
y_1^2 + y_2^2 - 3(y_3^2 + y_4^2) = 0,
\end{align*} 
where we have only the zero solutions modulo eight. In this case we 
consider the linear system $y_k = 0$ for $1 \leq k \leq t \leq 4$ instead.
This case is covered in Section \ref{Linsystem}.

For most quadratic forms we have $t \geq 5$ and have the following Lemma.

\begin{lem} \label{Standard-quad-result}
If $s \geq 5$, $d_i \neq 0$ for $1 \leq i \leq s$ and not all coefficients 
$d_i$ in
\begin{align} \label{diag-quad}
d_1x_1^2 + \ldots + d_sx_s^2 = 0,
\end{align}
have the same sign, then \eqref{diag-quad} has $C(\dv) N^{s-2} + o(N^{s-2})$ solutions 
with $x_i \in [1,N]$ for some constant $C(\dv) > 0$ dependent on the coefficients $d_i$.
\end{lem}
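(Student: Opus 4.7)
The plan is to apply the Hardy--Littlewood circle method directly. With $T(\alpha) := \sum_{x=1}^N \e{\alpha x^2}$, orthogonality gives
$$R(N) = \int_{\T} \prod_{i=1}^s T(d_i \alpha)\, d\alpha$$
as the number of solutions to \eqref{diag-quad} with $x_i \in [1,N]$. I would dissect $\T$ into major arcs $\Ma = \bigcup_{q \leq P} \bigcup_{(a,q)=1}\{\alpha : \|\alpha - a/q\| \leq PN^{-2}\}$ for a parameter $P = N^{\eta}$ with small $\eta > 0$, and complementary minor arcs $\ma$.

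On the minor arcs I would combine Weyl's inequality $\sup_{\alpha \in \ma}|T(\alpha)| \ll N^{1-\sigma+\epsilon}$ (for some $\sigma = \sigma(\eta) > 0$) with Hua's mean-value estimate $\int_{\T}|T(\alpha)|^4\, d\alpha \ll N^{2+\epsilon}$. Extracting one factor via the sup-norm bound, applying the $L^4$-bound to four further factors (after the change of variable $\alpha \mapsto d_i\alpha$, which uses $d_i \neq 0$), and bounding any remaining $s-5$ factors trivially by $N$ gives
$$\int_{\ma}\prod_{i=1}^s |T(d_i\alpha)|\, d\alpha \ll N^{s-2-\sigma+\epsilon} = o(N^{s-2}),$$
which is precisely the step requiring $s \geq 5$. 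On the major arcs the standard approximation $T(d_i\alpha) = q^{-1}S(q,d_i a)\,I(d_i\beta) + O(q^{1/2+\epsilon})$, with $\alpha = a/q + \beta$, quadratic Gauss sum $S$ and $I(\gamma) = \int_0^N \e{\gamma u^2}\,du$, collapses the integral to the expected asymptotic
$$R(N) = \Sing(\dv)\,\mathfrak{J}(\dv)\, N^{s-2} + o(N^{s-2}),$$
where $\Sing(\dv) = \prod_p \sigma_p(\dv)$ and $\mathfrak{J}(\dv)$ denote the singular series and singular integral.

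It remains to check that $C(\dv) := \Sing(\dv)\,\mathfrak{J}(\dv) > 0$. The mixed-sign hypothesis on $\dv$ ensures that the real variety $\sum d_i y_i^2 = 0$ meets the open cube $(0,1)^s$ transversally in a smooth $(s-2)$-dimensional surface of positive measure, so $\mathfrak{J}(\dv) > 0$. For the singular series one argues that each local density $\sigma_p(\dv)$ is bounded below by Hensel's lemma once $\Q_p$-solubility is established, and for non-degenerate quadratic forms in $s \geq 5$ variables Meyer's theorem guarantees $\Q_p$-solubility at every prime $p$; the product $\prod_p \sigma_p(\dv)$ converges absolutely because the Euler factors at primes $p \nmid 2d_1\cdots d_s$ differ from $1$ by $O(p^{-(s-2)/2})$.

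The main technical obstacle is the major-arc analysis combined with the verification of positivity at the finitely many small ``bad'' primes dividing $2d_1\cdots d_s$, which requires a direct local calculation. However, since Lemma \ref{Standard-quad-result} is a classical application of the circle method in the style of Hardy--Littlewood (essentially contained in Davenport's \emph{Analytic Methods for Diophantine Equations}), I would proceed largely by citation, giving only enough detail to extract the explicit dependence on $\dv$ needed in later sections.
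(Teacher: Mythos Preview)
Your proposal is correct and matches the paper's approach: the paper simply cites Chapter 8 of Davenport's \emph{Analytic Methods for Diophantine Equations} and Chapter 2 of Vaughan, which is exactly the circle-method argument you have sketched (and you yourself note that you would proceed largely by citation). No changes are needed.
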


\begin{proof}
The proof is essentially given in Chapter 8 of \cite{Dav}. 
Chapter 2 of \cite{Vau} also contains
all the necessary estimates to deduce the result. 
Another approach can be found in the recent paper \cite{HB2}.
\end{proof}

The last ingredient for the proof of Theorem \ref{Main-thm} are
$L^p$-properties for certain exponential sums.
For a function $g: \N \to \C$ and $\mathcal{A} \subset [1,N]$ we define
\begin{align} \label{eq-Vg}
L_{g}(\alpha) = \sum_{z \leq N} g(z) e(\alpha z) \mbox{\quad and \quad}
V_g(\alpha, \beta) = \sum_{x \leq N} g(x) e(\alpha x^2 + \beta x)
\end{align}
and write $L_{\mathcal{A}}(\alpha)$ or 
$V_{\mathcal{A}}(\alpha, \beta)$ in the case $g = 1_{\mathcal{A}}$ and 
$L(\alpha), V(\alpha, \beta)$ for the sums without any weight $g$.
We use Appendix \ref{AppC} to derive two useful $L^p$ estimates 
along the lines of \cite{Keil}.

The general structure of the paper is as follows.
In Section \ref{Bilinear} we treat the case $r \geq 5$ with a refinement of Liu's method \cite{Liu}.
Appendix \ref{AppA} provides the new necessary ingredient, a sharp `Vinogradov lemma'.
In Sections \ref{PD} to \ref{Konvex} we consider the non-degenerate part of the case $r \leq 4$
and use `convexity' methods to simplify our mean-value integrals to deduce 
a correlation estimate for the exponential sums in \eqref{eq-Vg} with $g = f$.
In Section \ref{Density}, we use the correlation estimates to prove Theorem \ref{Main-thm}.
Section \ref{Linsystem} finally deals with the degenerate cases, where we can 
extract a linear subsystem from $Q$, which can be treated by Gowers' theory \cite{Gow}.
Appendix \ref{AppB} provides a short proof for the uniformity norm estimate for completeness.

\section{The Bilinear Sum Method} \label{Bilinear}

The main goal of this section is to deduce correlation estimate \eqref{CorS-est} in the case $r \geq 5$.
We follow Liu \cite{Liu} and simplify his approach by removing the `geometry of numbers' argument.
For a quadratic form $Q(\xv) = \xv^TQ\xv$ define the exponential sum
\begin{align*}
S_g(\alpha) = \sum_{\xv \leq N} g(\xv) e(\alpha Q(\xv)),
\end{align*}
where $g(\xv) = \prod_{i = 1}^s g_i(x_i)$ and $|g_i| \leq 1$.
The main technical result in this section is an $L^p$-estimate for this quadratic exponential sum.

\begin{thm} \label{Lp-thm1}
Let $Q$ have off-rank $r \geq 1$. Then for $p > 4/r$ we have
\begin{align*}
\int_0^1 |S_g(\alpha)|^p \, d\alpha \ll_p N^{ps-2}.
\end{align*}
Assume the $L^1$-bound $\sum_{x \leq N} |g_i(x)| \leq 2\delta N$ and $r \geq 5$. 
Then $p > 4/5$ implies
\begin{align*}
\int_0^1 |S_g(\alpha)|^p \, d\alpha \ll_{p,s} \delta^{(s-10)p}N^{ps-2}.
\end{align*}
\end{thm}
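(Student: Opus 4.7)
The plan is to follow Liu's bilinear sum approach, replacing Liu's geometry-of-numbers step by the sharp Vinogradov lemma of Appendix~\ref{AppA}. The off-rank hypothesis supplies the necessary bilinear block structure: after permuting variables, we may assume $Q = \begin{pmatrix} A & B \\ B^T & C \end{pmatrix}$ with $\mathrm{rank}(B) = r$, and after further permutations within the two blocks that the top-left $r \times r$ piece $B_0$ of $B$ is non-singular. Partition $\xv$ accordingly: label the pivot coordinates as $\yv, \zv \in \Z^r$ and collect the remaining $s-2r$ coordinates as $\wv$, so that
\begin{equation*}
Q(\xv) = 2\yv^T B_0 \zv + \Phi_1(\yv,\wv) + \Phi_2(\zv,\wv)
\end{equation*}
for some quadratic forms $\Phi_1, \Phi_2$ containing no $\yv\zv$ cross terms.

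For the first estimate, I restrict to an even integer $p = 2k$, with the general case obtained by interpolation against $\|S_g\|_\infty \leq N^s$. Then $\int_0^1 |S_g(\alpha)|^{2k}\,d\alpha$ is a weighted count of solutions to the $Q$-equality $\sum_{i=1}^{k} Q(\xv^{(i)}) = \sum_{i=k+1}^{2k} Q(\xv^{(i)})$. Apply Cauchy--Schwarz in the $\yv$-variables to remove the $\Phi_1$-dependence; the residual phase reduces to $e(2\alpha\,\yv^T B_0 \vv)$ for an integer vector $\vv$ built from differences of $\zv$'s. Because the $g_i$ factor across the pivot variables, the inner $\yv$-sum is a product of $r$ one-dimensional geometric sums, and non-singularity of $B_0$ guarantees that $\vv \neq \nullv$ forces at least one of these sums to carry a non-trivial linear frequency. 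The sharp Vinogradov lemma of Appendix~\ref{AppA} then quantifies the $\alpha$-concentration of such geometric sums precisely, producing the $N^{-2}$ saving and hence $\int |S_g|^p \ll N^{ps-2}$ throughout the range $p > 4/r$.

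For the stronger bound with $r \geq 5$, I would carry the hypothesis $\sum_{x \leq N} |g_i(x)| \leq 2\delta N$ through the argument for every variable outside the pivot pair $(\yv, \zv)$, bounding those variables trivially by their $L^1$-norms rather than by $N$. This inserts a factor $(2\delta N)^{s-2r}$ per copy of $S_g$, while the $2r$ pivot variables absorb the $N^{-2}$ Vinogradov saving. At the threshold $r = 5$ exactly ten variables remain at the trivial $N$-scale, giving the net bound $\delta^{(s-10)p}N^{ps-2}$; for $r > 5$ the same argument produces a strictly better estimate, so the stated form holds uniformly.

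The chief technical obstacle is the sharpness at the threshold $p = 4/r$: naive Weyl differencing only yields the range $p > 8/r$, since each Cauchy--Schwarz halves the effective exponent. The sharp Vinogradov lemma is designed precisely to control the $\alpha$-concentrations tightly enough to avoid this loss. A secondary difficulty in the second claim is that the Cauchy--Schwarz applied to the pivot variables must be arranged so as not to spoil the $L^1$-control over the non-pivot variables, which forces careful bookkeeping of the individual $L^1$ versus $L^\infty$ norms on each $g_i$ throughout the computation.
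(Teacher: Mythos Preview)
Your overall strategy (exploit the bilinear block structure, Cauchy--Schwarz to linearise, then feed into the sharp Vinogradov lemma) is the right one and matches the paper. There are, however, two genuine problems with the execution.

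First, the detour through even moments is both unnecessary and leaves a gap. Interpolation against $\|S_g\|_\infty \leq N^s$ only extends the estimate \emph{upwards} in $p$; it gives nothing for $p$ between $4/r$ and the smallest even integer above $4/r$. In particular, for $r \geq 5$ you would miss the entire range $4/5 < p < 2$, and the paper later applies the theorem with $p = 8/9$, so this range matters. The paper avoids this by never restricting $p$: it applies Cauchy--Schwarz \emph{pointwise} to $S_g(\alpha)$ (over the $\xv_a$-block, squaring the $\xv_b$-sum) to obtain the uniform bound $|S_g(\alpha)| \ll N^{s-r} K(\alpha)^r$, where $K(\alpha)$ is the explicit Vinogradov envelope \eqref{eq-K}. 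One then simply raises to the $p$-th power and integrates, using Lemma~\ref{K-bound} that $\int_0^1 K(\alpha)^q\,d\alpha \ll_q N^{q-2}$ for $q > 4$; this covers all $p > 4/r$ directly.

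Second, your claim that for $r > 5$ ``the same argument produces a strictly better estimate'' is backwards. Reserving $2r$ pivot variables and putting the $L^1$-bound on the remaining $s - 2r$ gives $\delta^{(s-2r)p} N^{ps-2}$, which for $r > 5$ carries \emph{fewer} factors of $\delta$ than $\delta^{(s-10)p}$ and is therefore weaker. The paper's fix is to ignore the extra off-rank: since $r \geq 5$, one can always select a $5 \times 5$ full-rank off-diagonal subblock, use exactly those $10$ coordinates as the bilinear pivot, and reserve all $s - 10$ remaining variables for the $L^1$-saving. This yields $\delta^{(s-10)p}$ uniformly in $r \geq 5$.
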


\begin{proof}
The matrix $Q$ is permutation equivalent to a matrix of the form 
\begin{align*}
\begin{pmatrix} 
  A   & R\\ 
  R^T & B 
\end{pmatrix}
\end{align*}
with rank$(R)=r$. To simplify notation, we can also assume 
that the first $r$ rows of $B$ are linearly independent.
Decompose the variables $\xv = (\xv_a,\xv_b)$ accordingly into two blocks of sizes $a$ and $b$
with at least $r$ variables each.
Then the quadratic form $Q(\xv)$ has the representation
\begin{align*}
Q(\xv) = \xv_a^T A \xv_a + 2 \xv_a^T R \xv_b + \xv_b^T B \xv_b,
\end{align*}
Write $g(\xv_a) = \prod_{i = 1}^a g_i(x_i)$ and $g(\xv_b) = \prod_{i = a+1}^s g_i(x_i)$.
Then we have the estimate
\begin{align*}
|S_g(\alpha)| = & \Big|\sum_{\xv_a \leq N} \sum_{\xv_b \leq N} g(\xv_a) g(\xv_b) 
e(\alpha (\xv_a^T A \xv_a + 2 \xv_a^T R \xv_b + \xv_b B \xv_b)) \Big|\\
\leq & \sum_{\xv_a \leq N} \Big|\sum_{\xv_b \leq N} g(\xv_b) e(\alpha (2 \xv_a^T R \xv_b + \xv_b B \xv_b)) \Big|\\
\leq & N^{a/2} \Big(\sum_{\xv_a \leq N} \Big|\sum_{\xv_b \leq N} g(\xv_b)
e(\alpha (2 \xv_a^T R \xv_b + \xv_b B \xv_b)) \Big|^2 \Big)^{1/2}
\end{align*}
by the inequality of Cauchy-Schwarz. 
The expression in the parentheses on the right hand side is
\begin{align*}
& \sum_{\xv_b \leq N} \sum_{\xv'_b \leq N} g(\xv_b) \overline{g(\xv'_b)}\sum_{\xv_a \leq N} 
e(\alpha (2 \xv_a^T R (\xv_b-\xv'_b) + \xv_b B \xv_b - \xv'_b B \xv'_b))\\
\leq & \sum_{\xv_b \leq N} \sum_{\xv'_b \leq N} \Big|\sum_{\xv_a \leq N} 
e(2 \alpha \xv_a^T R (\xv_b - \xv'_b)) \Big| \\
\leq & \sum_{\xv_b \leq N} \sum_{\xv'_b \leq N} \prod_{i =1}^a 
\min\left(N, \|2 \alpha R_i (\xv_b - \xv'_b)\|^{-1} \right)
\end{align*}
where $R_i$ is the $i$th row of $R$.

Consider the equations $y_i = 2 R_i (\xv_b - \xv'_b)$. The variables $y_i$ can vary over
an interval $[-PN,PN]$ for some constant $P$ depending on the size of the coefficients of $R$. 
Since $R$ has rank $r$, the system of equations 
$y_i = 2 R_i (\xv_b - \xv'_b)$ has $O(N^{2b-r})$ solutions for given $|y_1|,\ldots,|y_r| \leq PN$.
We bound the other $a-r$ factors ($i > r$) trivially by $N$ and obtain the bound
\begin{align*}
|S_g(\alpha)|^2 \ll N^{2b+2a-2r} \sum_{|\yv| \leq M} \prod_{i =1}^r \min\left(N, \|\alpha y_i \|^{-1} \right).
\end{align*}
Since $a + b = s$ and the inner expression factors into $r$ independent sums, we can apply
Lemma \ref{Weyl-est}. Write $\alpha = a/q + \beta$ with $|\beta| \leq 1/(qN)$ for some $q \leq N$ 
by Dirichlet's approximation theorem and define
\begin{align} \label{eq-K}
K(\alpha) = \Big(N \log q + \min\Big\{ \frac{N^2}{q}, \frac{|\log(|\beta| N^2)| + 1}{|\beta|q} \Big\} \Big)^{1/2}.
\end{align}
If the representation of $\alpha$ turns out to be non-unique, 
we take the minimal value attained by the various functions on the right hand side of \eqref{eq-K}.
With this definition, we get from Lemma \ref{Weyl-est} the estimate
\begin{align} \label{Sf-est}
|S_g(\alpha)| \ll N^{s-r} K(\alpha)^r.
\end{align}
To obtain the first $L^p$-bound in Theorem \ref{Lp-thm1} we 
prove the following useful lemma.

\begin{lem} \label{K-bound}
For $p > 4$ we have $\int_0^1 |K(\alpha)|^p\,d\alpha \ll_p N^{p-2}$. 
\end{lem} 

\begin{proof}
We decompose $[0,1]$ according to the Dirichlet approximations, which give us 
\begin{align*}
\int_0^1 |K(\alpha)|^p \, d\alpha 
\ll \sum_{q \leq N} \sum_{a = 1}^q \int_{|\beta| \leq 1/(qN)} |K(a/q + \beta)|^p \, d\beta,
\end{align*}
where the summation in $a$ is only over the elements with $(a;q) = 1$.
We decompose the integration over $\beta$ further into
sets with $|\beta| \leq N^{-2}$ and a dyadic decomposition $|\beta| \in (2^iN^{-2},2^{i+1}N^{-2}]$
for $0 \leq i \leq \log_2(Nq^{-1})$.
For $|\beta| \leq N^{-2}$ we get by \eqref{eq-K} the contribution
\begin{align*}
\sum_{q \leq N} \sum_{a = 1}^q N^{-2} \Big(N \log q + N^2 q^{-1} \Big)^{p/2} \ll N^{p-2}.
\end{align*}
On each dyadic part $|\beta| \in (2^iN^{-2},2^{i+1}N^{-2}]$, we obtain the contribution
\begin{align*} 
2^i N^{-2} \Big(N \log q + N^2 (i + 1)2^{-i} q^{-1} \Big)^{p/2}.
\end{align*}
Apply the bound $|x+y|^{p/2} \ll |x|^{p/2} + |y|^{p/2}$ and sum over $i$ to arrive at
\begin{align*}
\sum_{q \leq N} \sum_{a = 1}^q \Big( (Nq)^{-1} (N \log q)^{p/2} + N^{-2} (N^2/q)^{p/2} \Big) \ll N^{p-2}.
\end{align*}
\end{proof}
Now the first part of Theorem \ref{Lp-thm1} follows from \eqref{Sf-est}.
For the second part we have $r \geq 5$. By permutation equivalence we may assume, that
\begin{align} \label{eq-matrix2}
Q = \begin{pmatrix}
A & R & X\\
R^T & B & Y\\
X^T & Y^T & C
\end{pmatrix},
\end{align}
where $R \in \R^{5 \times 5}$ is a full rank matrix.
We can divide the variable vector $\xv$ into $(\xv_a,\xv_b,\xv_c)$, where $\xv_a$ and $\xv_b$ contain
five variables and $\xv_c$ the remaining $s-10$ variables.
Then we have the bound 
\begin{align}  \label{Sg-extra-est}
|S_g(\alpha)| \leq \sum_{\xv_c \leq N} |g_c(\xv_c)| \Big|\sum_{\xv_a \leq N}
\sum_{\xv_b \leq N} g_a(\xv_a)g_b(\xv_b) e(\alpha Q(\xv)) \Big|,
\end{align}
where the functions $g_a,g_b$ and $g_c$ are defined in a similarlar way as above.
If we expand the quadratic form $Q(\xv)$ by use of \eqref{eq-matrix2} 
and the decomposition $(\xv_a,\xv_b,\xv_c)$ we get
\begin{align} \label{eq-expand-matrix}
\xv^TQ\xv = (\xv_a^T,\xv_b^T) \begin{pmatrix}A & R\\R^T & B\end{pmatrix} 
\begin{pmatrix}\xv_a \\\xv_b\end{pmatrix} + 
2 \xv_a^T X \xv_c + 2 \xv_b^T Y \xv_c + \xv_c^T C \xv_c.
\end{align}
The last summand depends only on $\xv_c$ and will disappear due to the absolute
value signs in \eqref{Sg-extra-est}. The other two parts which contain $\xv_c$ 
can be seen as a sum of linear forms $L_{i,\xv_c}(x_i)$ for $i \leq 10$.
Therefore, the absolute value of the inner sum in \eqref{Sg-extra-est} 
can be seen as $|S_h(\alpha)|$ for a new function
\begin{align*}
h_{\xv_c}(\xv_a,\xv_b) = \prod_{i=1}^{10} g_i(x_i) e(\alpha L_{i,\xv_c}(x_i))
\end{align*}
and the quadratic form corresponding to the $10\times 10$ matrix in \eqref{eq-expand-matrix}.

For $p > 4/5$ we use \eqref{Sf-est} and obtain the bound
\begin{align*}
& \int_0^1 |S_g(\alpha)|^p \, d\alpha \leq \int_0^1 \Big|\sum_{\xv_c \leq N} |g_c(\xv_c)||S_h(\alpha)|\Big|^p \, d\alpha\\
\ll & \int_0^1 \Big|\sum_{\xv_c \leq N} |g_c(\xv_c)|N^{5} K(\alpha)^{5}\Big|^p \, d\alpha
\ll \Big(\sum_{\xv_c \leq N} |g_c(\xv_c)| \Big)^p N^{10p - 2}.
\end{align*}
The result now follows from the assumption $\sum_{x \leq N} |g_i(x)| \leq 2\delta N$.
\end{proof}

Having proven Theorem \ref{Lp-thm1}, we can deduce a correlation estimate for the exponential sum $S_g(\alpha)$
in the cases $r \geq 5$.
From the discussion of Section \ref{Not-Out}, the assumption in Theorem \ref{Main-thm}, we have
by Lemma \ref{Standard-quad-result} the lower bound
\begin{align*}
\int_0^1 S(\alpha) \, d\alpha \gg N^{s-2},
\end{align*}
where $S(\alpha)$ is the exponential sum with $g = 1$. On the other hand, 
if we write $S_{\mathcal{A}}(\alpha)$ for the exponential sum with the indicator function
$g = 1_{\mathcal{A}}$ of the set $\mathcal{A}$, we obtain
\begin{align*}
\int_0^1 S_{\mathcal{A}}(\alpha) \, d\alpha = \delta N
\end{align*}
since there are only trivial solutions in the set $\mathcal{A}$.
By comparing those two quantities, we derive
\begin{align*}
\int_0^1 |S_{g}(\alpha)| \, d\alpha \gg \delta^s N^{s-2}
\end{align*}
for $g(\xv) = 1_{\mathcal{A}^s}(\xv) - \delta^s$ with $\delta = |\mathcal{A}|/N$
as long as $N \gg_Q \delta^{-2}$, say.
While this function $g$ doesn't satisfy the conditions of Theorem \ref{Lp-thm1}, 
we can write it as a finite sum $g(\xv) = \sum_{i=1}^s f_i(\xv)$ of functions
\begin{align} \label{fi-def}
f_i(\xv) =  (1_{\mathcal{A}}(x_i) - \delta) \delta^{i-1} \prod_{j > i} 1_{\mathcal{A}}(x_j),
\end{align}
that factor into a product of factors $h_j$. Each of those satisfies the $L^1$-condition
$\sum_{x \leq N} |h_j| \leq 2 \delta N$. Therefore, by part two of Theorem \ref{Lp-thm1} 
we have for some $i \leq s$ the estimate
\begin{align*}
\delta^s N^{s-2} & \ll \sup_{\alpha} |S_{f_i}(\alpha)|^{1-p} \int_0^1 |S_{f_i}(\alpha)|^p \, d\alpha \\
& \ll \sup_{\alpha} |S_{f_i}(\alpha)|^{1-p}  (\delta N)^{(s-10)p} N^{10p-2}.
\end{align*}
Now set $p = 8/9 > 4/5$, for example, and deduce the correlation estimate
\begin{align} \label{CorS-est}
\sup_{\alpha} |S_{f_i}(\alpha)| \gg \delta^{s+80} N^s.
\end{align}
This correlation estimate is used in Section \ref{Density} to run the usual density increment
method of Roth \cite{Roth}.

\section{Partial diagonalisation} \label{PD}

The correlation estimate for the case $r \leq 4$ requires 
more work and the problem splits into several subcases.
In this section we take the first step and 
prove a structure result for quadratic forms of low off-rank 
to extract a partial diagonal structure.

By permutation equivalence, we may assume that
\begin{align} \label{eq-matrix-decomp}
Q = \begin{pmatrix}
A & R & M\\
R^T & B & N\\
M^T & N^T & C
\end{pmatrix},
\end{align}
where $R \in \R^{r \times r}$ is a full rank matrix.
We want to show that we can `diagonalize' $C$ by adding at most $r$ linear equations to $Q(\xv)=0$.
Since the size of the matrix $C$ in \eqref{eq-matrix-decomp} is $s - 2r$, 
we can hope that methods for diagonal quadrics can provide solutions, 
as long as $s$ is not too small compared to $r$.
We begin by stating a simple lemma.

\begin{lem} \label{lem-deter}
Let $R \in \R^{r \times r}$ be a full rank matrix and $\vv,\wv \in \R^r$, then there is exactly one
$c \in \R$ such that the matrix
\begin{align*}
\begin{pmatrix}
R & \vv\\ \wv^T & c
\end{pmatrix}
\end{align*}
has rank $r$, namely $c = \wv^T R^{-1} \vv$.
\end{lem}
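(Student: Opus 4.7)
The plan is to argue via the Schur-complement determinant expansion. Since $R$ is invertible, the top $r$ rows of the $(r+1)\times(r+1)$ matrix
\begin{align*}
M(c) = \begin{pmatrix} R & \vv\\ \wv^T & c \end{pmatrix}
\end{align*}
are already linearly independent, so $\mathrm{rank}(M(c)) \geq r$ for every $c$. Consequently $M(c)$ has rank exactly $r$ if and only if $\det M(c) = 0$.

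Next I would compute $\det M(c)$ by the standard block-matrix identity
\begin{align*}
\det M(c) = \det(R)\,(c - \wv^T R^{-1} \vv),
\end{align*}
which follows from the factorisation
\begin{align*}
\begin{pmatrix} R & \vv\\ \wv^T & c \end{pmatrix}
= \begin{pmatrix} I & 0\\ \wv^T R^{-1} & 1 \end{pmatrix}
\begin{pmatrix} R & \vv\\ 0 & c - \wv^T R^{-1} \vv \end{pmatrix}.
\end{align*}
Since $\det(R) \neq 0$, the equation $\det M(c) = 0$ has the unique solution $c = \wv^T R^{-1}\vv$, which gives both existence and uniqueness in one stroke.

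There is no real obstacle here; the only thing worth checking carefully is that the degenerate case $\mathrm{rank}(M(c)) < r$ cannot occur, which I have already ruled out by the invertibility of the $R$-block in the first $r$ rows. The proof is therefore essentially a one-line Schur-complement computation. If one prefers an even more elementary route, the same conclusion can be reached by writing the last column of $M(c)$ as a linear combination of the first $r$ columns: the coefficient vector must be $R^{-1}\vv$ in order to match the top entries, and this forces the bottom entry to equal $\wv^T R^{-1}\vv$.
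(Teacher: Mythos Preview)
Your proof is correct and follows essentially the same route as the paper: both arguments use a block-triangular factorisation to expose the Schur complement $c - \wv^T R^{-1}\vv$. The paper left-multiplies by $\begin{pmatrix} R^{-1} & 0\\ \wv^T R^{-1} & -1\end{pmatrix}$ and reads off the rank directly from the resulting matrix, whereas you write the same identity as a product and read off the determinant; these are two phrasings of the same computation.
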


\begin{proof}
Multiplication by an invertable matrix on the left leads to the relation
\begin{align*}
\begin{pmatrix}
R^{-1} & 0\\ \wv^TR^{-1} & -1
\end{pmatrix}
\begin{pmatrix}
R & \vv\\ \wv^T & c
\end{pmatrix}
= \begin{pmatrix}
E_r & R^{-1}\vv \\ 0 & \wv^T R^{-1}\vv - c
\end{pmatrix},
\end{align*}
where $E_r \in \Z^{r \times r}$ is the identity matrix. This implies the result.
\end{proof}

Now we apply Lemma \ref{lem-deter} to `diagonalize' $C$.

\begin{lem} \label{lem-C-diag}
Let $M,N,C,R$ be the matrices from \eqref{eq-matrix-decomp}.
Then 
\begin{align*}
C = N^TR^{-1}M + D,
\end{align*} 
where $D$ is a diagonal matrix.
\end{lem}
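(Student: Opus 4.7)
The plan is to pin down each off-diagonal entry of $C$ by a direct application of Lemma~\ref{lem-deter}, and then read off that $D := C - N^T R^{-1} M$ has vanishing off-diagonal part. The key input is the off-rank hypothesis: every square submatrix of $Q$ that avoids all diagonal entries has rank at most $r$.

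Fix indices $i \neq j$ with $1 \leq i, j \leq s - 2r$. The first step is to single out the $(r+1) \times (r+1)$ submatrix of $Q$ with row indices $\{1,\dots,r,\,2r+i\}$ and column indices $\{r+1,\dots,2r,\,2r+j\}$. In the block notation of \eqref{eq-matrix-decomp} this submatrix reads
\begin{align*}
\begin{pmatrix} R & M_{\cdot,j} \\ (N_{\cdot,i})^T & C_{ij} \end{pmatrix}.
\end{align*}
The two index sets are disjoint precisely because $i \neq j$, so no diagonal entry of $Q$ is captured, and the off-rank hypothesis gives rank at most $r$.

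Since $R$ is one of its $r \times r$ submatrices and has rank $r$, the rank of the displayed matrix is in fact exactly $r$. Lemma~\ref{lem-deter}, applied with $\vv = M_{\cdot,j}$ and $\wv = N_{\cdot,i}$, therefore pins the bottom-right entry to the unique value
\begin{align*}
C_{ij} = (N_{\cdot,i})^T R^{-1} M_{\cdot,j} = (N^T R^{-1} M)_{ij}.
\end{align*}
As $i \neq j$ was arbitrary, every off-diagonal entry of $D = C - N^T R^{-1} M$ vanishes, proving the claim.

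The only real choice in this argument is the choice of submatrix: the extra row must come from one of the first two blocks and the extra column from the other, so that the new entry selected from $C$ sits off the diagonal of $Q$. Once that configuration is identified, Lemma~\ref{lem-deter} does the rest, and the diagonal entries of $D$ are left completely unconstrained, consistent with what the lemma claims.
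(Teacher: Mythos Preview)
Your proof is correct and follows exactly the same route as the paper: for $i\neq j$ you extract the $(r+1)\times(r+1)$ submatrix $\begin{pmatrix} R & \mv_j \\ \nv_i^T & c_{ij} \end{pmatrix}$ of $Q$, observe that it avoids the diagonal (your disjointness-of-index-sets remark makes this explicit), and then invoke Lemma~\ref{lem-deter} to force $c_{ij} = \nv_i^T R^{-1}\mv_j$. One small quibble: in your closing paragraph you say ``the extra row must come from one of the first two blocks and the extra column from the other,'' but in fact both the extra row $2r+i$ and the extra column $2r+j$ lie in the \emph{third} block; it is the base rows and columns carrying $R$ that live in the first two blocks respectively. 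This is only commentary and does not affect the argument.
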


\begin{proof}
For a fixed element $c_{ij}$ from $C$ with $i \neq j$ consider the matrix
\begin{align*}
\begin{pmatrix}
R & \mv_j\\ \nv_i^T & c_{ij}
\end{pmatrix},
\end{align*}
where $\mv_j,\nv_i^T$ are the $j$th column and $i$th row from the matrices $M$ and $N^T$
respectively. This is a submatrix of $Q$ that lies completely off-diagonal and, therefore,
cannot have rank more than $r$. By Lemma \ref{lem-deter}, we get $c_{ij} = \nv_i^T R^{-1} \mv_j$,
which gives the desired claim.
\end{proof}

Lemma \ref{lem-C-diag} says that there is a diagonal matrix $D$ such that the rows 
of $C-D$ are linear combinations of rows of $M$.
The next step is to find a common basis of linear forms, that span the rows of $M$, $N$ and $C-D$. 

\begin{lem} \label{lem-H}
Let $M,N$ be as in \eqref{eq-matrix-decomp}.
There is a matrix $H \in \Z^{t \times (s-2r)}$ with $t \leq r$ and
linearly independent rows such that $M = \hat{M}H$ and $N = \hat{N}H$
for some matrices $\hat{M},\hat{N} \in \Q^{r \times t}$.
\end{lem}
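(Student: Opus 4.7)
The plan is to view $M$ and $N$ jointly as a single off-diagonal block of $Q$ and then apply the off-rank hypothesis directly. In the block decomposition \eqref{eq-matrix-decomp}, stacking $M$ on top of $N$ yields a $2r \times (s-2r)$ integer matrix whose row index set $\{1,\ldots,2r\}$ and column index set $\{2r+1,\ldots,s\}$ are disjoint. It is therefore a submatrix of $Q$ containing no diagonal elements, so by the off-rank hypothesis its rank is at most $r$.

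Call this common rank $t$, so $t \leq r$. I would then select $t$ rows of the stacked matrix that form a $\Q$-basis of its row space and assemble them as the rows of $H$. Because the chosen rows come directly from an integer matrix, one automatically has $H \in \Z^{t \times (s-2r)}$ with linearly independent rows. The remaining rows of the stack, hence every row of $M$ and every row of $N$, then lie in the $\Q$-span of the rows of $H$, which gives the desired factorisations $M = \hat{M} H$ and $N = \hat{N} H$ for some $\hat{M}, \hat{N} \in \Q^{r \times t}$. Rationality of $\hat{M}$ and $\hat{N}$ is automatic; one can even write them in closed form as $\hat{M} = M H^T (H H^T)^{-1}$ and $\hat{N} = N H^T (H H^T)^{-1}$, which are well defined because $H$ has full row rank.

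There is no real obstacle here: the only substantive step is the opening observation, namely that the matrix obtained by stacking $M$ on top of $N$ is itself an off-diagonal submatrix of $Q$ (its row and column index sets being disjoint by construction), so that its rank is bounded by the off-rank parameter $r$. Everything after that is routine linear algebra over $\Q$.
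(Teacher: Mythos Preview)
Your proposal is correct and is essentially the same argument as the paper's own proof: stack $M$ over $N$, observe that this is an off-diagonal submatrix of $Q$ so its rank $t$ is at most $r$, pick $t$ spanning rows to form $H$, and read off the rational coefficient matrices $\hat{M},\hat{N}$. Your explicit formula $\hat{M} = M H^T (H H^T)^{-1}$ is a nice extra detail but not needed.
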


\begin{proof}
The submatrix consisting of $M$ and $N$, as in \eqref{eq-matrix-decomp}, namely
\begin{align*}
\begin{pmatrix}
M\\ N
\end{pmatrix},
\end{align*}
has rank $t$ with $t \leq r$, since $Q$ has off-rank $r$. One can find
$t$ rows which span the rowspace of this matrix. Arrange these into a single matrix $H$.
Then we can write
\begin{align*}
\begin{pmatrix}
M\\ N
\end{pmatrix}
= \begin{pmatrix}
\hat{M}\\ \hat{N}
\end{pmatrix}
\cdot H,
\end{align*}
for some matrices $\hat{M},\hat{N} \in \Q^{r \times t}$.
\end{proof}

Combining Lemma \ref{lem-C-diag} and Lemma \ref{lem-H}, we obtain
\begin{align} \label{diag-eq}
C = H^T\hat{N}^TR^{-1}\hat{M}H + D.
\end{align}

This insight is sufficient to diagonalize $C$. But there is another
technical rank condition on $H$ that we need for later parts of this chapter, 
which requires another step of linear algebra. Consider the following property for matrices.
\begin{itemize}
\item[\bf AP:] \emph{If we remove any column from $H \in \Z^{t \times g}$, then the remaining matrix contains 
two disjoint $t \times t$ non-singular submatrices.}
\end{itemize}

This is the key property for the application of the classical circle method.
We can always ensure that it is satisfied by passing to a submatrix and the use
of the following lemma.

\begin{lem} \label{vec-lem}
Let $A$ be a $t \times m$ matrix over a field $K$ and $q$ be a positive integer.
Then either $A$ includes $q$ disjoint $t \times t$ non-singular submatrices or
all but at most $q(t-d)-1$ columns are contained in a $d$-dimensional subspace for
some $0 \leq d \leq t-1$.
\end{lem}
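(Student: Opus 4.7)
I would prove the lemma by induction on $q$, recognising the statement as the matroid base-packing theorem applied to the vector matroid on the columns of $A$.

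For $q = 1$ the claim is immediate: if $A$ has rank $t$, take any $t$ linearly independent columns as the non-singular submatrix; otherwise $A$ has rank $d \le t - 1$, all $m$ columns lie in its $d$-dimensional column space, and $0 \le 1 \cdot (t-d) - 1$ holds trivially.

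For the inductive step, suppose $A$ does not contain $q$ disjoint non-singular $t \times t$ submatrices. The natural reformulation is to find a subset $S$ of columns with $|A \setminus S| \le q(t - d) - 1$, where $d := \dim \operatorname{span}(S) \le t-1$; setting $V := \operatorname{span}(S)$, every column outside $V$ lies in $A \setminus S$, so at most $q(t-d)-1$ columns lie outside $V$ and the second alternative holds.

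To produce such an $S$ I use a matroid exchange / augmenting path argument. Choose $q$ pairwise disjoint linearly independent subsets $I_1, \ldots, I_q \subseteq A$ with $\sum_j |I_j|$ maximum. By assumption $\sum_j |I_j| < qt$, so some $I_{j_0}$ fails to be a basis. Form the exchange digraph on $A$ with an arc from $e \in I_j$ to $f \notin I_j$ whenever $(I_j \setminus \{e\}) \cup \{f\}$ is linearly independent. Let $T \subseteq A$ be the set of columns reachable in this digraph from some $f$ with $I_{j_0} \cup \{f\}$ independent for some $j_0$ with $|I_{j_0}| < t$. The maximality of $\sum_j |I_j|$ rules out augmenting paths, so $T \cap (A \setminus \bigcup_j I_j) \subseteq \operatorname{span}(T \cap \bigcup_j I_j)$, and a counting argument using this relation yields $|T| \le q(t - \dim \operatorname{span}(T)) - 1$. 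Taking $S := T$ completes the proof.

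The main obstacle is the no-augmenting-path counting step: one must bookkeep the intersections $I_j \cap T$ carefully, using the structure of the exchange digraph, to derive the inequality $|T| \le q(t - \operatorname{rank}(T)) - 1$. This is the vector-matroid specialisation of Edmonds' matroid union theorem and, while conceptually natural, requires a few pages of matroid manipulations to make rigorous; one could alternatively invoke Edmonds' theorem directly at the cost of an external citation.
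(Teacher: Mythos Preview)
The paper does not prove this lemma itself; it simply cites Proposition~6.45 of Aigner's \emph{Combinatorial Theory} (a matroid-theory reference) and the paper of Low--Pitman--Wolff. Your recognition of the statement as the vector-matroid case of Edmonds' base-packing theorem is exactly right, and your closing suggestion---invoke Edmonds' theorem directly---is effectively what the paper does and would constitute a complete proof.

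Your own augmenting-path sketch, however, has a concrete slip in the final step. You assert that the counting yields $|T| \le q\bigl(t - \operatorname{rank}(T)\bigr) - 1$ and then set $S := T$. But the conclusion you need is $|A \setminus S| \le q\bigl(t - \operatorname{rank}(S)\bigr) - 1$; with $S = T$ these do not match. In the standard argument the witness set is the \emph{unreachable} set $S := A \setminus T$, and the inequality that actually drops out is $|T| \le q\bigl(t - \operatorname{rank}(A \setminus T)\bigr) - 1$. A small test case shows your version fails: take $q=2$, $t=3$ and five columns $e_1,e_2,e_3,e_1',e_2'$ with $e_i'=e_i$. The maximal packing $I_1=\{e_1,e_2,e_3\}$, $I_2=\{e_1',e_2'\}$ has unique source $e_3$ (the only column independent of $I_2$) and no outgoing exchange arcs from $e_3$, so $T=\{e_3\}$. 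Taking $S=T$ puts four columns outside a one-dimensional space, exceeding $q(t-1)-1=3$; taking $S=A\setminus T$ gives $d=2$ and exactly one column outside, matching $q(t-2)-1=1$. (A minor further point: you announce induction on $q$, but the argument you actually sketch is direct and never invokes the inductive hypothesis.)
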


\begin{proof}
This is a special case of Proposition 6.45 in \cite{Aig} and 
a proof may also be found in \cite{LPW}.
\end{proof}

Now, either $H$ satisfies the property [AP] or we can remove the `bad' column 
and apply Lemma \ref{vec-lem} with $q = 2$. This gives us that at most $2(r-d)-1$
columns are not contained in a $d$-dimensional subspace for some $0 \leq d \leq r-1$.
Remove these other exceptional columns as well and we obtain a matrix $\overline{H}$ with rank
at most $d$ and $w \geq s-2r-2(r-d)$ columns.
Rename variables if necessary and write $H = \begin{pmatrix} H' & \overline{H} \end{pmatrix}$. 
By splitting the other matrices accordingly, we arrive at the form
\begin{align*}
Q = \begin{pmatrix}
A & R & M' & \overline{M}\\
R^T & B & N' & \overline{N}\\
M'^T & N'^T & C_{11} & C_{12}\\
\overline{M}^T & \overline{N}^T & C_{12}^T & C_{22}
\end{pmatrix},
\end{align*}
where $\overline{M} = \hat{M} \overline{H}$, $\overline{N} = \hat{N} \overline{H}$, $M' = \hat{M}H'$, $N' = \hat{N}H'$. 
The matrix $C$ splits according to formula \eqref{diag-eq} into parts $C_{11} ,C_{12}, C_{12}^T, C_{22}$ with 
$C_{12} = H'^T\hat{N}^TR^{-1}\hat{M}\overline{H}$, for example.

If we choose $d$ minimal in the above procedure, we end up with a matrix $\overline{H}$
that contains $d$ linearly independent rows, which (as a matrix) satisfy property [AP]. 
Otherwise, we could apply Lemma \ref{vec-lem} again and obtain
the same result for a smaller value of $d$.

Now we decompose our variables suitable for this decomposition.
Since $\overline{H}$ has $w$ columns, we have $C_{22} \in \Z^{w \times w}$ with $w \geq s-4r+2d > 0$
if $s \geq 5 + 3r$.
Call the first $s-w$ variables $\yv = (y_1,\ldots,y_{s-w})$ and the remaining $w$ variables $\xv = (x_1,\ldots,x_w)$.
The original equation $\xv^TQ\xv=0$ decomposes into 
\begin{align} \label{eq-mat1}
\yv^T 
\begin{pmatrix}
A & R & M' \\
R^T & B & N'\\
M'^T & N'^T & C_{11}
\end{pmatrix} \yv + 2 \yv^T 
\begin{pmatrix}
\overline{M}\\
\overline{N}\\
C_{12}
\end{pmatrix} \xv + \xv^TC_{22}\xv = 0.
\end{align}
For $1 \leq i \leq d$ we add linear equations 
\begin{align*}
\mu_{i1}x_1 + \ldots + \mu_{it}x_w = h_i,
\end{align*}
where the coefficients $\mu_{ij} \in \Z$ correspond
to $d$ linearly independent rows of $\overline{H}$ that satisfy [AP].
The variables $h_i$ may have any integer value, but due to the restrictions on the $x_j$
they will range over a bounded interval of size $O_Q(N)$ as well.
Any occurrence of the term $\overline{H}\xv$ can now be replaced by
a suitable linear combination of the variables $\hv$.
Write $Z_1$ for the first matrix in \eqref{eq-mat1} and 
$Z_2 = \begin{pmatrix} \hat{M} & \hat{N} & H'^T\hat{N}^TR^{-1}\hat{M} \end{pmatrix}^T$.
From equation \eqref{diag-eq} we obtain
\begin{align*}
C_{22} = \overline{H}^T \hat{N}^TR^{-1}\hat{M} \overline{H} + D_1
\end{align*}
for a diagonal matrix $D_1$ and can replace $\xv^TC_{22}\xv$ by
$\hv^T Z_3 \hv + \xv^T D_1 \xv$ with $Z_3 = \hat{N}^TR^{-1}\hat{M}$.
Then equation \eqref{eq-mat1} changes into
\begin{align*}
\yv^T Z_1 \yv + 2 \yv^T Z_2 \hv + \hv^T Z_3 \hv + \xv^T D_1 \xv = 0.
\end{align*}
Combine the parts, which only contain $\yv$ and $\hv$, into a single quadratic form
\begin{align*}
P(\yv,\hv) = \begin{pmatrix}
\yv^T & \hv^T
\end{pmatrix}
\begin{pmatrix}
Z_1 & Z_2 \\
Z_2^T & Z_3
\end{pmatrix}
\begin{pmatrix}
\yv \\ \hv
\end{pmatrix}.
\end{align*}
If we write $\lambda_i$ for the diagonal entries in $D_1$, we obtain the system
\begin{equation} \label{diag-sys}
\begin{split} 
\lambda_1x_1^2 + \ldots + \lambda_wx_w^2 &= P(\yv,\hv),  \\ 
\mu_{11}x_1 + \ldots + \mu_{1w}x_w &= h_1, \\
\vdots \quad\qquad\qquad \vdots \qquad & \ \quad \vdots \\
\mu_{d1}x_1 + \ldots + \mu_{dw}x_w &= h_d,
\end{split}
\end{equation}
where the matrix $(\mu_{ij}) \in \Z^{d \times w}$ has property [AP]
and the matrix of the quadratic form $P$ has off-rank at least $r \geq d$ 
uniformly in $\hv$.
The number of diagonal variables is at least $w \geq s-4r +2d$, which is at least $5 + 2d - r > 0$ 
for $s \geq 5 + 3r$ and $r \leq 4$.

If necessary, we can multiply the first equation \eqref{diag-sys} by a suitable non-zero integer 
to ensure that all the entries in the matrix $P$ are
integers, thereby avoiding any complications later.

\section{Preparation for Section \ref{Konvex}}

In Section \ref{PD} we found $d \leq 4$ linear equations such that by adding them to our original quadric 
$Q(\xv) = 0$ we end up with the partially diagonal form \eqref{diag-sys}. 
Some of the coefficients $\lambda_i$ can be zero and we split the vector $\xv = (x_1,\ldots,x_w)$
according to this condition. Denote by $z_i$ the variables with vanishing coefficients in the quadratic equation.
We obtain after renaming
\begin{equation} \label{Main-eq-sys}
\begin{split}  
\qquad\qquad \lambda_1x_1^2 + \ldots + \lambda_ux_u^2 &= P(\yv,\hv),  \\ 
\nu_{11}z_1 + \ldots + \nu_{1v}z_v + \mu_{11}x_1 + \ldots + \mu_{1u}x_u &= h_1, \\
\vdots \quad\qquad\qquad \vdots \qquad \quad \vdots \quad\qquad\qquad \vdots \qquad & \ \quad \vdots \\
\nu_{d1}z_1 + \ldots + \nu_{dv}z_v + \mu_{d1}x_1 + \ldots + \mu_{du}x_u &= h_d,
\end{split}
\end{equation}
where $u + v = w$. We recall that the integer variables $h_i$
can be restricted to a bounded range $[-CN,CN]$ for some $C \geq 1$.

There are now two cases to consider, which result in completely different treatments
of the system of equations.
In the first case (the one we consider in the next section) 
the columns $(\nu_{ij})_{1 \leq i \leq d}$ in the linear part of the system are linearly independent. 
In particular, this implies that $v \leq d$.
If they are not linearly independent, we deal with this system in Section \ref{Linsystem} and use 
methods of Gowers \cite{Gow} on linear equations in dense sets.

Recall the definition of $V_{g}(\alpha,\beta)$ and $L_g(\alpha)$
from \eqref{eq-Vg} and define a new exponential sum corresponding to the right hand side
of \eqref{Main-eq-sys} by
\begin{align*}
T_{g}(\alpha,\bev) = 
\sum_{\substack{\yv \leq N \\ |\hv| \leq CN}} g(\yv) e(\alpha P(\yv,\hv) + \bev \cdot \hv).
\end{align*}
In the case $g(\yv) = 1_{\mathcal{A}^{s-w}}(\yv) = 1_{\mathcal{A}}(y_1) \cdots 1_{\mathcal{A}}(y_{s-w})$
we write $T_{\mathcal{A}}(\alpha,\bev)$ instead.
We can use the trivial estimate
\begin{align*}
|T_{g}(\alpha,\bev)| \leq \sum_{|\hv| \leq CN} \Big| \sum_{\yv \leq N} g(\yv) e(\alpha P(\yv,\hv)) \Big|
\end{align*}
to remove the linear term. Since the quadratic form $P(\yv,\hv)$ has off-rank at least $r$ for any $\hv$,
we get (as in the proof of the second part of Theorem \ref{Lp-thm1})
\begin{align} \label{eq-T-bilin}
|T_g(\alpha,\bev)| \ll N^{s-w+d-r} K(\alpha)^r,
\end{align}
for $K(\alpha)$ as in \eqref{eq-K} and any bounded $g(\yv) = g_1(y_1) \cdots g_{s-w}(y_{s-w})$.

To simplify the estimation of \eqref{eq-int-absch} we introduce
abbreviations for two often occuring `actions'.

\begin{itemize}
\item[\bf Rep:] (Replace) \emph{Suppose that one is given a set of linearly independent linear forms $\Omega$ and another 
linear form $l$. Choose one linear form $l'$ from the set $\Omega$ and exchange
it with $l$ in such a way that $\{l\} \cup \Omega \backslash \{l'\}$ is still linearly independent. 
We use this action to replace one exponential sum by another in such a way that the corresponding
linear forms obey the replacement property.}
\item[\bf EaS:] (Estimate and Simplify) \emph{If we estimate an exponential sum integral by a sum of several such integrals,
we need to deal only with the maximal contribution. This happens, for example, when we apply the inequality
$x_1^{p_1}\cdots x_n^{p_n} \leq x_1^p + \ldots + x_n^p \ll x_1^p$ with $p = p_1 + \ldots + p_n$
and assume without loss of generality that $x_1^p$ is the largest term. 
If different terms require different treatments, we make an additional case analysis.}
\end{itemize}

\section{Convexity and Correlation Estimates} \label{Konvex}

Now that we set up the necessary notation and conventions, 
we can write the number of solutions to \eqref{Main-eq-sys} as the Fourier-integral
\begin{align} \label{eq-sol-int}
\int_{\T^{d+1}} T_{\mathcal{A}}(\alpha,\bev) \prod_{i=1}^v L_{\mathcal{A}}(\nuv_i \cdot \bev) 
\prod_{j=1}^u V_{\mathcal{A}}(\lambda_j\alpha,\muv_j \cdot \bev)  \,d\alpha d\bev.
\end{align}
For $\mathcal{A} \subset [1,N]$ and $\delta = N^{-1}|\mathcal{A}|$ we define the balanced function $f$ by
\begin{align} \label{balanced}
f(n) = 1_{\mathcal{A}}(n) - \delta.
\end{align}
We replace each occurrence of $1_{\mathcal{A}}$ in the integral above by $f + \delta 1_{[1,N]}$
and expand \eqref{eq-sol-int} into $2^{s}$ integrals of the form
\begin{align} \label{eq-int-absch}
E = \int_{\T^{d+1}} T_g(\alpha,\bev) \prod_{i=1}^v L_{f_i}(\nuv_i \cdot \bev)
\prod_{j=1}^u V_{g_j}(\lambda_j\alpha,\muv_j \cdot \bev) \,d\alpha d\bev,
\end{align}
where $f_i,g_j \in \{f, \delta 1_{[1,N]}\}$ and $g$ is a product of $s-w$ such functions.
We consider the $2^s -1$ integrals that contain the function $f$ as `error terms' and give upper bounds
on their size to deduce correlation estimates for the exponential sums later.

We start with the case, where $f_1 = f$ and show later how to modify the argument to obtain the 
estimate in the other cases. By pulling out half of $L_f$, we obtain from \eqref{eq-int-absch}
the estimate
\begin{align*}
E \ll \sup_{\beta} |L_{f}(\beta)|^{1/2} \int_{\T^{d+1}} |T_g| 
|L_{f_1}|^{1/2} \prod_{i=2}^v |L_{f_i}|
\prod_{j=1}^u |V_{g_j}| \,d\alpha d\bev,
\end{align*}
where we left out the dependences on the variables to save space.
By property [AP] from Section \ref{PD}, the linear forms $\nuv_i \cdot \bev$ and $\muv_j \cdot \bev$ 
in the exponential sums $L_{f_i}$ and $V_{g_j}$
contain two basis sets after removing $\nuv_1 \cdot \bev$.
We can group these exponential sums into two products $W_1$ and $W_2$ corresponding to the two bases.
Since the linear forms $\nuv_i \cdot \bev$ are linearly independent by assumption, 
we can make sure that all $L_{f_i}$ for $2 \leq i \leq u$
are contained in $W_1$. We obtain (after renaming) an estimate of the form
\begin{align} \label{eq-10}
E \ll \sup_{\beta} |L_{f}(\beta)|^{1/2} \int_{\T^{d+1}} |T_g| 
|L_{f_1}|^{1/2} |W_1||W_2| \prod_{j=1}^{w-2d-1} |V_{g_j}| \,d\alpha d\bev.
\end{align}
Since the half-power of a linear exponential sum would cause problems later, we find by [Rep]
an exponential sum $V$ inside $W_2$ that can be replaced by $L_{f_1}$.
Then we apply [EaS] with the estimate $|L|^{1/2} |V| \leq |V|^{1/2} |L| + |V|^{3/2}$ 
to replace $|L|^{1/2}$ by $|V|^{1/2}$.
In the case of the first summand, we have substituted $|V|$ by $|L|$ inside $W_2$.
Assume without loss of generality that $V$ is $V_{g_{w-2d}}$. Our estimate is now
\begin{align*}
E \ll \sup_{\beta} |L_{f}(\beta)|^{1/2} \int_{\T^{d+1}} |T_g| 
|V_{g_{w-2d}}|^{1/2} |W_1||W_2| \prod_{j=1}^{w-2d-1} |V_{g_j}| \,d\alpha d\bev.
\end{align*}

At this point we are in the position to apply a cascade of estimates.
We replace $|T_g|$ by the right hand side of \eqref{eq-T-bilin}, use [EaS] to replace the product of the
$V_{g_j}$ (including $|V_{g_{w-2d}}|^{1/2}$) by $|V_{g_1}|^{w-2d-1/2}$ and again [EaS] with the estimate 
$|W_1W_2| \leq |W_1|^2 + |W_2|^2$.
We obtain a much simpler upper bound of the form
\begin{align*}
E \ll N^{s-w-r+d} \sup_{\beta} |L_{f}(\beta)|^{1/2} \int_{\T^{d+1}} K^r |V_{g_1}|^{w-2d-1/2} |W_1|^2 \,d\alpha d\bev.
\end{align*}
Before we proceed further, we reduce the power of $|V_{g_1}|$ in order to obtain a more uniform estimate in 
$\delta$ later on. We can replace $w-2d \geq s-4r \geq 5-r$ by the minimal value $5 - r$ for which 
the following argument works and pull out any
additional powers of $|V_{g_1}|$. With the trivial estimate $|V_{g_1}| \leq 2\delta N$ we obtain
\begin{align*}
E \ll \delta^{w-2d+r-5} N^{s-d-5} \sup_{\beta} |L_{f}(\beta)|^{1/2} \int_{\T^{d+1}} K^r |V_{g_1}|^{9/2-r} |W_1|^2 \,d\alpha d\bev.
\end{align*}

Now we continue with the main argument. The estimate 
\begin{align} \label{KV-est}
K^r |V_{g_1}|^{9/2-r} \leq K^{9/2} + |V_{g_1}|^{9/2} 
\end{align}
results in two cases. If the first expression on the right hand side gives the dominating contribution, then
\begin{align*}
E \ll \delta^{w-2d+r-5} N^{s-d-5} \sup_{\beta} |L_{f}(\beta)|^{1/2} \int_{\T^{d+1}} K^{9/2} |W_1|^2 \,d\alpha d\bev.
\end{align*}
Here we can use the fact that $K$ only depends on $\alpha$ with Parseval's identity
(or equivalently `interpretation of the integral as a diophantine equation')
\begin{align*}
\int_{\T^{d}}  |W_1(\alpha,\bev)|^2 \, d\bev = N^d,
\end{align*}
to get the bound
\begin{align*}
E \ll \delta^{w-2d+r-5} N^{s-5} \sup_{\beta} |L_{f}(\beta)|^{1/2} \int_{\T} K(\alpha)^{9/2}  \,d\alpha.
\end{align*}
We can apply Lemma \ref{K-bound} to get the final upper bound
\begin{align} \label{eq-up1}
E \ll \delta^{w-2d+r-5} N^{s-5/2} \sup_{\beta} |L_{f}(\beta)|^{1/2}.
\end{align}

If the second term in \eqref{KV-est} gives the dominating contribution, then
\begin{align} \label{eq-LV}
E \ll \delta^{w-2d+r-5} N^{s-d-5} \sup_{\beta} |L_{f}(\beta)|^{1/2} \int_{\T^{d+1}} |V_{g_1}|^{9/2} |W_1|^2 \,d\alpha d\bev,
\end{align}
and things are slightly more complicated. First we find by [Rep] the linear form in $W_1$ that corresponds to
$\muv_1 \cdot \bev$. This linear form can either belong to another function $V_{g_j}$
or $\muv_1 \cdot \bev$ must be in the span of the linear forms of the 
linear exponential sums $L_{f_i}$ that appear in $W_1$. We have to deal with these subcases differently.

In the first subcase, we can use [EaS] with the estimate
\begin{align*}
|V_{g_1}|^{9/2}|V_{g_j}|^2 \ll |V_{g_1}|^{13/2}
\end{align*} 
and make the change of variables $\gamma_i = \nuv_i \cdot \bev$ and $\gamma_j = \muv_j \cdot \bev$ 
to bound the integral in \eqref{eq-LV} by a multiple of
\begin{align*}
\int_{\T^{d+1}} |V_{g_1}(\lambda_1\alpha, \gamma_1)|^{13/2} \prod_{i=2}^h |L_{f_i}(\gamma_i)|^2
\prod_{j=h+1}^d |V_{g_j}(\lambda_j\alpha,\gamma_j)|^2 \,d\alpha d\gamv,
\end{align*}
for some $h \leq d$ dependent on the previous steps. 
Since the variables $\gamma_j$ appear now separately, we can use Parseval's identity $d-1$ times
to integrate out $\gamma_2$ to $\gamma_d$ and obtain
\begin{align*}
E \ll \delta^{w-2d+r-5} N^{s-6} \sup_{\beta} |L_{f}(\beta)|^{1/2} 
\int_{\T^2} |V_{g_1}(\lambda_1\alpha, \gamma_1)|^{13/2} \,d\alpha d\gamma_1.
\end{align*}
A change of variables (to remove $\lambda_1$) and Theorem \ref{VA-est} imply
the final bound
\begin{align*}
E \ll \delta^{w-2d+r-5} N^{s-5/2}\sup_{\beta} |L_{f}(\beta)|^{1/2}.
\end{align*}

In the second subcase of the analysis of \eqref{eq-LV}, we can bound the integral
in \eqref{eq-LV} (after a change of variables) by a multiple of
\begin{align*}
\int_{\T^{d+1}} |V_{g_1}(\lambda_1\alpha, \nuv \cdot \gamv)|^{9/2} 
\prod_{i=1}^h |L_{f_i}(\gamma_i)|^2
\prod_{j=h+1}^d |V_{g_j}(\lambda_j\alpha,\gamma_j)|^2 \,d\alpha d\gamv,
\end{align*}
for some $\nuv \in \Q^{d}$ with the property that the linear form 
$\nuv \cdot \gamv$ only contains the variables $\gamma_1,\ldots,\gamma_h$.
As before, we integrate out the functions $|V_{g_j}|^2$ by Parseval's identity and end up with
\begin{align*}
N^{d-h} \int_{\T^{h+1}} |V_{g_1}(\lambda_1\alpha, \nuv \cdot \gamv)|^{9/2} 
\prod_{i=1}^h |L_{f_i}(\gamma_i)|^2 \,d\alpha d\gamma_1\cdots d\gamma_h.
\end{align*}
Now we are again in a situation similar to the case with $K(\alpha)$.
Observe that the functions $L_{f_i}$ are independent of $\alpha$.
We can use Lemma \ref{Vg4-est} and Parseval's identity for the integrals 
over $\gamma_1,\ldots,\gamma_h$ to bound this from above by
\begin{align*}
 N^{d-h} \sup_{\beta} \int_{\T} & |V_{g_1}(\lambda_1\alpha, \beta)|^{9/2} \, d\alpha
\int_{\T^{h}} \prod_{i=1}^h |L_{f_i}(\gamma_i)|^2 \,d\gamma_1\cdots d\gamma_h \\
 \ll & N^{d-h} N^{5/2} N^{h} = N^{d+5/2}.
\end{align*}
Combined with \eqref{eq-LV}, we obtain estimate \eqref{eq-up1} again.

This is the end of the estimates under the assumption that $f_1 = f$. We now address the other cases.
Obviously, the same procedure works, when $f_i = f$ for some other $i \leq v$.
If $g_j = f$ for some $j \leq u$, the argument is even simpler. 
We obtain instead of \eqref{eq-10} the inequality
\begin{align*}
E \ll \sup_{\alpha,\beta} |V_{f}(\alpha,\beta)|^{1/2} \int_{\T^{d+1}} |T_g| 
|V_{g_{w-2d}}|^{1/2} |W_1||W_2| \prod_{j=1}^{w-2d-1} |V_{g_j}| \,d\alpha d\bev,
\end{align*}
which can be treated in the same manner as before and gives the final bound
\begin{align*}
E \ll \delta^{w-2d+r-5} N^{s-5/2} \sup_{\alpha,\beta} |V_{f}(\alpha,\beta)|^{1/2}.
\end{align*}

To describe the last remaining case write $g(\yv) = \prod_{k=1}^{s-w} h_k(y_k)$
for the function $g$ appearing in $T_g$ with $h_k = f$ for some $1 \leq k \leq s-w$.
Instead of pulling out half of $T_g$, we take the $1/(2r)$-th power.
This gives us
\begin{align*}
E \ll \sup_{\alpha,\bev} |T_g(\alpha,\bev)|^{1/2r} \int_{\T^{d+1}} |T_g|^{1-1/2r} 
|W_1||W_2| \prod_{j=1}^{w-2d} |V_{g_j}| \,d\alpha d\bev
\end{align*}
instead of \eqref{eq-10}.
The same estimates as before apply, where $K(\alpha)^{1/2}$ is replaced by
$|V_{g_1}|^{1/2}$. This does not matter since we use [EaS] with \eqref{KV-est} to separate the cases
with $K$ and $V_{g_1}$ anyway. The final estimate has the slightly different form
\begin{align*}
E \ll \delta^{w-2d+r-5} N^{s-2 - (s-w+d)/2r} \sup_{\alpha,\bev} |T_g(\alpha,\bev)|^{1/2r}.
\end{align*}
This concludes the last case of the error term estimates and we proceed to deduce the
corelation estimates.

One of the integrals appearing in the expansion of \eqref{eq-sol-int} is equal to
\begin{align} \label{eq-expt-sol}
\delta^s \int_{\T^{d+1}} T(\alpha,\bev) \prod_{i=1}^v L(\nuv_i \cdot \bev) 
\prod_{j=1}^u V(\lambda_j\alpha,\muv_j \cdot \bev)  \,d\alpha d\bev
\end{align}
and gives the expected number of solutions to \eqref{Main-eq-sys} with $x_i,z_j \in [1,N]$.
From the discussion in Section \ref{Not-Out} and Lemma \ref{Standard-quad-result}, we know that \eqref{eq-expt-sol} is
bounded from below by $\gg \delta^s N^{s-2}$. 
On the other hand, \eqref{eq-sol-int} is of size exactly $\delta N$ since by assumption there are only trivial solutions
to our system \eqref{Main-eq-sys}.
Therefore, at least one of the $2^s-1$ `error terms' $E$ has to be of size 
$\gg \delta^s N^{s-2}$ if $N \gg_Q \delta^{-2}$.
The three previously obtained upper bounds transform 
with $w \geq s-4r+2d$ into one of the correlation estimates
\begin{align*}
\sup_{\alpha, \bev} |T_{g}| \gg \delta^{2r(3r+5)} N^{s-w+d}, \quad
\sup_{\alpha, \beta} |V_{f}| \gg \delta^{6r+10} N \mbox{\ \ or \ \ }
\sup_{\alpha} |L_{f}| \gg \delta^{6r+10} N,
\end{align*}
as long as $N \gg_Q \delta^{-2}$ and $r \leq 4$. In the next section, 
these lower bounds are used to obtain structural information about the set $\mathcal{A}$.

\section{Density Increment Method} \label{Density}

In equation \eqref{CorS-est} and Section \ref{Konvex} we have found the correlation estimates
\begin{align*}
& \sup_{\alpha} |S_{f_i}(\alpha)| \gg \delta^{s+80} N^s, \qquad 
\sup_{\alpha, \bev} |T_{g}(\alpha,\bev)| \gg \delta^{136} N^{s-w+d}, \\
& \sup_{\alpha} |L_{f}(\alpha)| \gg \delta^{34} N, \qquad \quad\
\sup_{\alpha, \beta} |V_{f}(\alpha,\beta)| \gg \delta^{34} N,
\end{align*}
that hold for suitable functions $f_i,g$ as long as $N \gg_Q \delta^{-2}$.
To reduce our work we transform the first three estimates to the fourth (with $\delta^{136}$ instead of $\delta^{34}$)
in the following way. We have from \eqref{fi-def} the representation
\begin{align*}
S_{f_i}(\alpha) = \sum_{\xv \leq N}(1_{\mathcal{A}}(x_i) - \delta) 
\delta^{i-1}\prod_{j > i} 1_{\mathcal{A}}(x_j) e(\alpha Q(\xv))\\
= \sum_{\xv' \leq N} \delta^{i-1}\prod_{j > i} 1_{\mathcal{A}}(x_j) \sum_{x_i \leq N} f(x_i) e(\alpha q_{\xv'}(x_i)),
\end{align*}
where $\xv'$ is the vector of variables without $x_i$ and $q_{\xv'}(x_i) = Q(\xv)$ is seen
as a quadratic polynomial in $x_i$
with linear and constant coefficients depending on $\xv'$.
Therefore, we deduce for some $d \in \Z$ and $\beta = \beta(\xv',\alpha)$ the estimate
\begin{align*}
|S_{f_i}(\alpha)| \leq  \sum_{\xv' \leq N} \delta^{i-1}\prod_{j > i} 1_{\mathcal{A}}(x_j) |V_f(d\alpha,\beta)|.
\end{align*}
Take the supremum over $\alpha$ and $\beta$ of $V$ outside the sum and conclude that
\begin{align*}
 \delta^{s+80} N^s \ll \sup_{\alpha} |S_{f_i}(\alpha)| 
\leq & \sup_{\alpha,\beta} |V_f(\alpha,\beta)| \sum_{\xv' \leq N} \delta^{i-1}\prod_{j > i} 1_{\mathcal{A}}(x_j) \\
\leq & \delta^{s-1} N^{s-1} \sup_{\alpha, \beta} |V_f(\alpha,\beta)|,
\end{align*} 
which gives the desired implication.

In a similar way we may reduce the second estimate to the fourth. 
The function $g$ in $T_g$ is a product of functions 
$f$ from \eqref{balanced} and $\delta 1_{[1,N]}$. 
We expand all but one of the balanced functions $f$ in $g$ into $1_{\mathcal{A}}-\delta$, 
giving us several exponential sums of the form
\begin{align*}
T_{g_i}(\alpha,\bev) = 
\sum_{\yv \leq N, |\hv| \leq CN} f(y_i) 
\prod_{j \neq i} g_j(y_j) e(\alpha P(\yv,\hv) + \bev \cdot \hv)
\end{align*}
with $g_j \in \{\delta 1_{[1,N]},1_{\mathcal{A}}\}$.
At least one of these has to be big and this implies a 
correlation estimate in exactly the same way as for $S_{f_i}$
if we estimate the sums over $\hv$ trivially.
Finally, we observe the identity $L_f(\beta) = V_f(0,\beta)$ which reduces the third case to the fourth.

The fourth correlation can be used to obtain a density increment by the following lemma.

\begin{lem} \label{exp-prog}
If $|V_f(\alpha,\beta)| \geq \eta N$ for some $(\alpha,\beta) \in \T^2$ and $\eta > 0$, then there is
an arithmetic progression $P \subset [1,N]$ of length $|P| \gg \eta^{2} N^{1/16}$ with
\begin{align*}
|\mathcal{A} \cap P| \geq (\delta + \eta/4) |P|.
\end{align*}
\end{lem}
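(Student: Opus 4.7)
The plan is to carry out a standard density increment argument adapted from Roth's method, but with a quadratic phase $e(\alpha x^2+\beta x)$ in place of the linear phase. The strategy is to approximate the exponential $e(\alpha x^2+\beta x)$ by a step function on short arithmetic sub-progressions, convert the lower bound on $|V_f(\alpha,\beta)|$ into an $\ell^1$-lower bound on $\sum f$ over those sub-progressions, and then extract the density increment from the mean-zero property of $f$.

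First I would apply Dirichlet's approximation theorem to find coprime $a,q$ with $q\leq Q$ and $|\alpha-a/q|\leq 1/(qQ)$, for a parameter $Q$ to be optimized. Partitioning $[1,N]$ into residue classes $b+q\Z$ and then into consecutive sub-APs of common difference $q$ and length $L$, one parametrises $x=b+kq$, $k\in[k_0,k_0+L)$, and writes $\alpha=a/q+\gamma$ with $|\gamma|\leq 1/(qQ)$. A direct calculation shows that modulo $1$ the phase $\alpha x^2+\beta x$ restricted to such a sub-AP takes the form $C+\mu j+\nu j^2$, with $\nu=\gamma q^2$ satisfying $|\nu|\leq q/Q$ and $\mu$ a specific real number depending on $(b,k_0)$. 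The quadratic-in-$j$ part has total variation at most $L^2|\nu|\leq L^2 q/Q$, which is $\leq \eta/(32\pi)$ provided $L\ll (\eta Q/q)^{1/2}$.

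To control the linear-in-$j$ part, I would apply a second Dirichlet approximation to $\mu$ (or an elementary pigeonhole), subdividing each sub-AP further so that $\|\mu j\|$ remains small throughout. After this refinement, $[1,N]$ is partitioned into arithmetic progressions $\{P_\ell\}$ (all of common difference $q$, or a small multiple thereof) of length $L'\geq L/R$ where $R$ is the cost of the linear refinement, and on each $P_\ell$ the phase $\alpha x^2+\beta x$ oscillates by at most $\eta/(16\pi)$. Hence
\begin{align*}
\Big|V_f(\alpha,\beta)-\sum_\ell e(c_\ell)\sum_{x\in P_\ell}f(x)\Big|\leq \tfrac{\eta}{8}N,
\end{align*}
so the hypothesis $|V_f|\geq\eta N$ together with the triangle inequality yields $\sum_\ell|\sum_{x\in P_\ell}f(x)|\geq (7\eta/8)N$. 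Because $\sum_{x\leq N}f(x)=0$, the sum of positive parts is at least $(7\eta/16)N$, and an averaging over the $P_\ell$ (whose lengths are essentially equal) produces a single $P=P_\ell$ with $\sum_{x\in P}f(x)\geq (\eta/4)|P|$, which is exactly the claimed density increment.

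The main obstacle is the parameter balancing, and this is what forces the exponent $1/16$. Taking $Q$ too large makes the quadratic variation $L^2q/Q$ acceptable only for very short $L$, while taking $Q$ too small leaves a range of $q$ (``minor arcs'') which must be excluded by a Weyl-type bound on $V(\alpha,\beta)$ and then transferred to $V_f=V_{\mathcal{A}}-\delta V$. Choosing $Q$ roughly of the form $N^{7/8}$ and absorbing the cost $R$ of the second Dirichlet step, the final length works out to $|P|\gg \eta^2 N^{1/16}$, which is what the lemma asserts.
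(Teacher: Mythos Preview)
The paper does not actually prove this lemma; the ``proof'' is a one-line citation of Lemma~B.1 in Appendix~B of \cite{Keil}. Your outline---Dirichlet-approximate $\alpha$, partition $[1,N]$ into sub-progressions on which $e(\alpha x^2+\beta x)$ is nearly constant, then extract the increment from $\sum_x f(x)=0$---is the standard fragmentation argument and is presumably what is carried out in \cite{Keil}.

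One step of your sketch does not work as written. You propose to rule out large $q$ by a ``Weyl-type bound on $V(\alpha,\beta)$ transferred to $V_f = V_{\mathcal A}-\delta V$''. But a minor-arc estimate $|V(\alpha,\beta)|=o(N)$ only yields $|V_f|\le |V_{\mathcal A}|+\delta|V|\le \delta N + o(N)$, and since in the application $\eta$ is a power of $\delta$ with $\eta\ll\delta$, this does not contradict $|V_f|\ge \eta N$. Weyl's inequality controls the \emph{unweighted} Weyl sum; there is no cheap transfer to a sum weighted by an arbitrary bounded $f$ (indeed, for $f(x)=e(-\alpha x^2-\beta x)$ one has $|V_f|=N$ for every $\alpha$). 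The length of the sub-progressions has to be controlled uniformly in $q$ by successive Dirichlet/pigeonhole refinements on the quadratic and then the linear coefficient of the localised phase, and it is the accumulation of these square-root losses that accounts for an exponent of the shape $1/16$. Apart from this mis-attribution of the mechanism, your plan is the right one.
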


\begin{proof}
This is the Lemma B.1 in Appendix B of \cite{Keil}. 
Similar statements for finite fields can be found in \cite{Gow}, for example.
\end{proof}

We use the large Fourier estimate $|V_f(\alpha,\beta)| \gg \delta^{136}N$ from above with Lemma \ref{exp-prog} 
to find a progression $P$ of length $\gg \delta^{272} N^{1/16}$, such that $\mathcal{A}$ has density
$\geq \delta + \lambda\delta^{136}$ on $P$, where $\lambda > 0$ is an absolute constant.
Due to the translation and dilation invariance of $Q(\xv)=0$, we end up with the same
problem on a subprogression, but with a slightly higher density.

Since the density is bounded by one, this procedure cannot last more than $\lambda^{-1}\delta^{-136}$ steps 
before reaching a contradiction.
This means that at some stage we have a non-trivial solution or the size of our progression is
$\ll \delta^{-2}$. The first option is not available by assumption.
Therefore, we have 
\begin{align*}
\delta^{150}N^{(1/16)^{\lambda^{-1}\delta^{-136}}} \ll_Q \delta^{-2}.
\end{align*}
Rearranging for $\delta$ we can deduce that $\delta \ll (\log\log N)^{-c}$ with $c = (137)^{-1}$, for example.

\section{A Linear Subsystem} \label{Linsystem}

In this section we consider the last two cases of Theorem \ref{Main-thm}.
First let us look at the case from Section \ref{Konvex}, 
when the columns of the linear part in \eqref{Main-eq-sys} turn out to be
linearly dependent. This allows us to extract a linear subsystem as follows.

Since the quadric is translation invariant we can set $y_k = z_j = x_i = z_0 \in \mathcal{A}$
for any $z_0$, and get a trivial solution of \eqref{Main-eq-sys} 
with uniquely defined $h_i(z_0) = -\nu_{i0} z_0$ for some $\nu_{i0} \in \Z$. 
Equipped with this information, we can set 
$y_k = x_i = z_0 \in \mathcal{A}$ and $h_i = -\nu_0 z_0$ for all $k$ and $i$ in \eqref{Main-eq-sys}
and obtain the translation invariant system
\begin{equation} \label{eq-linsys}
\begin{split}  
\nu_{11}z_1 + \ldots + \nu_{1v}z_v + \nu_{10}z_0 &= 0, \\
\vdots \qquad \qquad \qquad \qquad \quad \vdots \quad  &\ \quad  \vdots \\
\nu_{d1}z_1 + \ldots + \nu_{dv}z_v + \nu_{d0}z_0 &= 0,
\end{split}
\end{equation}
with coefficients $\nu_{i0} \in \Z$.
By assumption, the rank of the first $v$ columns is at most $v-1$. 
By translation invariance, the last column is a linear combination of the first $v$
columns and this implies that the rank of the whole coefficient matrix is still at most $v-1$.
This gives a linear system in $v+1$ variables with at most $v-1$
independent equations.
A very similar system appears in Section \ref{Not-Out} in the situation, where
the number of non-zero coefficients in \eqref{diagonal-equation} is at most four.
Both systems can be treated by the method of Gowers \cite{Gow}, as we show now.

First we remove equations from \eqref{eq-linsys} until
the remaining rows become linearly independent.
To simplify notation, we can assume that we are left with $w$ equations
and $w+2$ variables. If more variables are left over, we can `fuse' them
by setting them equal to $z_0$.

We can also assume that the columns of $E$ are in general position. To see this
we take $\Z$-linear combinations of rows and rename variables to transform 
the equation $E \zv = 0$ into the form
\begin{align} \label{eq-linmodified}
\begin{pmatrix}
D & \av & \bv
\end{pmatrix} \cdot \zv = 0,
\end{align}
where $D \in \Z^{w \times w}$ is a diagonal matrix of full rank with non-zero diagonal entries and 
$\av,\bv \in \Z^w$ are integer vectors.
The property that the columns of the matrix in \eqref{eq-linmodified} are in general position 
translates into the arithmetic conditions that 
$a_i \neq 0, b_i \neq 0$ for all $i$ and $a_ib_j - a_jb_i \neq 0$ for $i \neq j$.

If this is not the case, we can combine two rows to deduce an equation of the form 
$m z_i = m z_j$ for some $1 \leq i < j \leq w+2$ and $m \neq 0$.
(The form of this equation is forced by translation invariance.)
It is then possible to reduce the system by one equation and one variable, leaving
us with the same situation with parameter $w-1$ instead of $w$.

We should briefly discuss the case $w = 1$. In this case we have one equation in three
variables of the form
\begin{align*}
d_1x_1 + d_2x_2 + d_3x_3 = 0,
\end{align*}
with $d_1 + d_2 + d_3 = 0$. If $d_i \neq 0$ for all $1 \leq i \leq 3$, then we are in the situation
of Roth's paper \cite{Roth} and we obtain a bound of the form $O((\log\log N)^{-1})$ for the density of $\mathcal{A}$.
If we have $d_i = 0$ for one of the coefficients, then we directly get a non-trivial solution in $\mathcal{A}$
as long as $|\mathcal{A}| \geq 2$.

Now consider the difference
\begin{align*}
\sum_{\zv, E\zv = 0} 1_{\mathcal{A}}(z_0) \cdots 1_{\mathcal{A}}(z_{w+1}) 
- \sum_{\zv, E\zv = 0} \delta 1_{[1,N]}(z_0) \cdots \delta 1_{[1,N]}(z_{w+1}).
\end{align*}
Since we assume that our system has only trivial solutions, this difference is
either of size around $\delta^{w+2}N^{2}$ or we have $N \ll \delta^{-w-1}$.
In the second case, we are done. Otherwise, we can write 
$1_{\mathcal{A}} = f + \delta 1_{[1,N]}$ for the balanced function $f$ from \eqref{balanced}
and expand the first sum into $2^{w+2}$ terms, one of which is cancelled by the second term.
Then we bound the remaining sums of the form 
\begin{align} \label{eq-norm-est}
\sum_{\zv, E\zv = 0} f_0(z_0) \cdots f_{w+1}(z_{w+1}).
\end{align}
with functions $f_i \in \{f, \delta 1_{[1,N]}\}$ and at least one of these $f_i$ equal to $f$.

To simplify the exposition and be able to cite a result from \cite{Gow}, we convert this sum into one with variables 
$z_i \in \Z/M\Z$ for some prime $M$ dependent on $N$ and $E$.
We choose the prime $M$ in an interval $[C_EN,2C_EN]$, which is possible by Bertrand's postulate.
More precisely, we take $C_E$ in such a way that the equation $E \zv = 0$ in $\Z/M\Z$
with $z_i \in [1,N]$ implies that $E\zv=0$ in $\Z$. 
This is always possible, if $C_E$ is big enough to avoid `wrap-around issues'. 
We set the functions $f_i(z_i)=0$ for $z_i \notin [1,N]$.

The `error terms' in \eqref{eq-norm-est} can be bounded by more symmetric expressions, the
so called {\em uniformity norms}. This is done in Appendix \ref{AppB}, where we deduce 
Lemma \ref{Uw-est}. We get the estimate (see Appendix \ref{AppB} for a definition of $\Delta$)
\begin{equation}  \label{eq-Gow-est}
\begin{split} 
& M^{-2} \sum_{\zv, E\zv = 0} f_0(z_0) \cdots f_{w+1}(z_{w+1}) \\
& \quad \leq \Big(M^{-w-2} \sum_{h_1,\ldots,h_w} \Big| \sum_{z_{w+1}} 
\Delta(f_{w+1};\hv)(z_{w+1})\Big|^2 \Big)^{2^{-w-1}}.
\end{split}
\end{equation}
For this to be useful, we have to assume that $f_{w+1} = f$. If this was not the case, we
can rename variables 
to obtain the desired outcome.

According to \cite{Gow} we define a function $f: \Z/M\Z \to \C$ to be $\alpha$-uniform of degree $w$ if 
\begin{align*}
\sum_{h_1,\ldots,h_w} \Big| \sum_{z} \Delta(f;\hv)(z)\Big|^2 \leq \alpha M^{w+2}.
\end{align*}
A set $\mathcal{A}$ is $\alpha$-uniform if its balanced function $f$ given by \eqref{balanced} is $\alpha$-uniform.
By assumption, our error term is $\gg \delta^{w+2}N^{2} \gg \delta^{w+2}M^{2}$, so in combination with the estimate \eqref{eq-Gow-est}, 
we derive that $f$ is not $\mu (\delta^{w+2})^{2^{w+1}}$-uniform for some small $\mu > 0$.
Combined with the following theorem of Gowers this allows us to deduce 
a density increment for $\mathcal{A}$ on a subprogression, similar to the one
in Lemma \ref{exp-prog}.

\begin{thm} \label{Gow-thm}
Let $\alpha \leq 1/2$ and let $\mathcal{A} \subset \Z/M\Z$ be a set which fails to be 
$\alpha$-unform of degree $k$. There exists a partition of $\Z/M\Z$ into arithmetic progressions
$P_1,\ldots, P_K$ of average size $M^{\alpha^{2^{2^{k+10}}}}$ such that
\begin{align*}
\sum_{j =1}^K \Big|\sum_{s \in P_j} f(s) \Big| \geq \alpha^{2^{2^{k+10}}} M.
\end{align*}
\end{thm}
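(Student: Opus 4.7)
The plan is to follow Gowers' original argument from \cite{Gow} for this inverse theorem. The hypothesis that $\mathcal{A}$ fails to be $\alpha$-uniform of degree $k$ translates, via the definition given just before the theorem statement, into a lower bound on a Gowers-type norm of the balanced function $f$ of $\mathcal{A}$: namely,
\begin{align*}
\sum_{h_1,\ldots,h_k} \Bigl|\sum_z \Delta(f;\hv)(z)\Bigr|^2 > \alpha M^{k+2}.
\end{align*}
I would first open up the inner square and apply Cauchy--Schwarz repeatedly (in the van der Corput / differencing style) to deduce that for a density $\gg \alpha^{O(1)}$ of choices of $h_1$, the function $\Delta(f;h_1)$ itself has large $U^{k-1}$-type norm. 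Iterating this differencing argument $k-1$ times peels off the derivatives until we land at the $U^2$ level, which can be expressed in Fourier-analytic terms as a single large exponential sum.

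Next I would apply Gowers' inductive inverse step to convert this back into structure for $f$ itself. The output of the inductive machine is that, after further Cauchy--Schwarz and a Weyl-type argument on polynomial phases, one finds, for a positive density collection of differences, a linear character $\xi \in \widehat{\Z/M\Z}$ for which $\widehat{f}(\xi)$ is large in magnitude; the dependence of $\xi$ on the differences will be sufficiently structured (through a Balog--Szemer\'edi--Gowers / Freiman-type argument) that a common linear phase $e(\xi z/M)$ correlates with $f$ on a large piece of $\Z/M\Z$. This is the step I expect to be the main obstacle: managing the iterated Cauchy--Schwarz losses and running the Balog--Szemer\'edi--Gowers / Freiman step so that the final quantitative dependence on $\alpha$ is only a tower of height $k+O(1)$, which is what produces the exponent $\alpha^{2^{2^{k+10}}}$ appearing in the statement.

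Having obtained a single linear phase $e(\xi z/M)$ with $|\sum_z f(z)e(\xi z/M)| \gg \alpha^{O_k(1)} M$, I would conclude by a standard partitioning argument: split $\Z/M\Z$ into arithmetic progressions $P_1,\ldots,P_K$ of common difference and length chosen so that $z \mapsto \xi z/M$ varies by at most, say, $1/100$ on each $P_j$. A routine computation (pigeonholing the length so that $M^{\alpha^{2^{2^{k+10}}}}$ is the right average size) then shows
\begin{align*}
\sum_{j=1}^K \Bigl|\sum_{s \in P_j} f(s)\Bigr| \gg \Bigl|\sum_z f(z) e(\xi z/M)\Bigr| \gg \alpha^{2^{2^{k+10}}} M,
\end{align*}
which is the desired conclusion. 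The bookkeeping of how the exponent of $\alpha$ degrades at each Cauchy--Schwarz step, the $U^2$ inverse step, and the Bohr-set-to-progression conversion is the technical heart of the argument, but it is carried out in detail in \cite{Gow} and I would simply reproduce that bookkeeping.
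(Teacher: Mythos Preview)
The paper does not give a proof of this theorem at all: it simply cites it as Theorem~18.5 of \cite{Gow}. So your proposal to reproduce Gowers' argument is already more than the paper does.

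That said, your sketch contains a genuine conceptual error in the middle step. You write that after the Balog--Szemer\'edi--Gowers / Freiman step one obtains a single \emph{linear} phase $e(\xi z/M)$ correlating with $f$ on a large piece of $\Z/M\Z$, and then partition into progressions on which $z\mapsto \xi z/M$ is nearly constant. This is only correct when $k=1$. For general $k$, what the iterated differencing plus BSG/Freiman machinery actually produces is a \emph{polynomial} phase of degree $k$ (more precisely: for many shifts $h_1,\ldots,h_{k-1}$ the derivative $\Delta(f;\hv)$ has a large Fourier coefficient at some frequency $\xi(\hv)$, and the structured dependence of $\xi$ on $\hv$ is what lets you anti-difference back up to a degree-$k$ polynomial phase correlating with $f$ on many long progressions). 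The final partition into arithmetic progressions is then not the elementary ``linear phase is nearly constant on short APs'' argument you describe, but a genuinely iterative application of Weyl differencing to make a degree-$k$ polynomial approximately constant --- and this iteration is exactly what generates the tower-type exponent $2^{2^{k+10}}$. Your sketch, as written, would only recover an exponent of the shape $\alpha^{C_k}$ with $C_k$ polynomial in some single exponential, not a double exponential, and would not actually close for $k\geq 2$.

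Since the paper only uses this result as a black box for $k\leq 4$, the cleanest fix is to do what the paper does: cite \cite{Gow} directly. If you want to include a sketch, the correction is to replace ``linear phase $e(\xi z/M)$'' by ``polynomial phase $e(P(z)/M)$ of degree $k$'' and to replace your final paragraph by the Weyl-type iterative refinement that reduces the degree of $P$ by one at each stage at the cost of shrinking the progressions.
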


\begin{proof}
This is Theorem 18.5 from \cite{Gow}.
\end{proof}

We note that the term \emph{arithmetic progression} in the theorem refers to `proper' 
arithmetic progressions, which keep their structure if projected down to $[1,N]$.
For details the reader is referred to \cite{Gow}.

To obtain the density increment we observe that 
\begin{align*}
\sum_{j =1}^K \sum_{s \in P_j} f(s) = \sum_{s \leq N} f(s) =  0
\end{align*}
by the definition of $f$ given in \eqref{balanced}. 
Therefore, by Theorem \ref{Gow-thm}, we have
\begin{align*}
\sum_{s \in P_j} f(s) \geq \frac{1}{2} \alpha^{2^{2^{k+10}}} |P_j|
\end{align*}
for some $j \leq K$, which can be translated into a density increment of size
\begin{align*}
\frac{1}{2} \Big(\mu (\delta^{w+2})^{2^{w+1}}\Big)^{2^{2^{w+10}}}
\end{align*}
on $P_j$. If we perform the same iteration as in Section \ref{Density},
we obtain the same result but with $c=2^{-2^{15}}$. This is due to the fact that
we need the argument only for $w \leq 4$,
which makes $c$ independent of the number of variables $s$.

\appendix

\section{A Version of Vinogradov's Lemma}   \label{AppA}

Here we prove Lemma \ref{Weyl-est}, a version of Vinogradov's lemma, 
that we need for the estimation of bilinear exponential sums.
While there are many versions in the literature, none of those seems to be good enough for our purpose.
Since we need to reprove it with explicit dependance on $\beta$, we take the opportunity to
state it with explicit constants as well, to provide a reference for possible numerical applications.

\begin{lem} \label{Weyl-est}
Let $\alpha = a/q + \beta$ with $|\beta| \leq \frac{1}{qN}$ and $(a;q) = 1$, then we have 
\begin{align*}
\sum_{|h| \leq N} \min\{N, \|\alpha h\|^{-1}\} & 
\leq 6N + 6 \min\left\{\frac{N^2}{q}, \frac{1}{|\beta| q} \left(|\log(|\beta| N^2)| + 2\right)\right\}\\
& + (4N+2q) \cdot (1 + \log q).
\end{align*}
\end{lem}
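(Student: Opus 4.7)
The plan is to decompose the sum by the residue class $j := ah \bmod q$. Setting $I_j = \{h \in [-N, N] : ah \equiv j \pmod q\}$, we have $|I_j| \leq 2N/q + 1$, and for $h \in I_j$ the identity $\alpha h \equiv j/q + \beta h \pmod 1$ combined with the hypothesis $|\beta h| \leq 1/q$ shows that the values $\alpha h \bmod 1$ cluster within $1/q$ of the lattice $\{j/q\}_{0 \leq j < q}$ in $\T$. I would split the analysis between the generic residues, where $\|j/q\|$ is substantial, and the three exceptional ones $j \in \{0, 1, q-1\}$ lying within $1/q$ of $0 \in \T$.

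For generic residues, those with $j^* := \min(j, q-j) \geq 2$, the triangle inequality gives $\|\alpha h\| \geq \|j/q\| - 1/q = (j^* - 1)/q$, and hence $\min\{N, \|\alpha h\|^{-1}\} \leq q/(j^* - 1)$. Summing over $h \in I_j$ costs the factor $|I_j|$, and then summing over $j$ with $j^* \geq 2$ (with the symmetry $j \leftrightarrow q - j$ contributing a factor of two) yields
\[
\sum_{j:\, j^* \geq 2}\sum_{h \in I_j} \min\{N, \|\alpha h\|^{-1}\} \leq 2(2N + q)\sum_{l = 1}^{\lfloor q/2\rfloor - 1} \frac{1}{l} \leq (4N + 2q)(1 + \log q),
\]
which is precisely the last summand of the theorem.

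For each of the exceptional residues $j \in \{0, 1, q-1\}$ the main task is to bound a one-dimensional sum. Parameterising $h \in I_j$ as $h = h_{j,0} + kq$, one rewrites $\|\alpha h\| = |\gamma_j + k\eta|$ with $\eta := \beta q$ (so $|\eta| \leq 1/N$) and $\gamma_j := j/q + \beta h_{j,0}$. Shifting $k$ so that $c := \gamma_j + k_0 \eta$ satisfies $|c| \leq |\eta|/2$, the $k = k_0$ term contributes at most $N$; for all other $k$ the sharp estimate $|c + (k-k_0)\eta| \geq (|k-k_0| - 1/2)|\eta|$ holds. Cutting the tail at $|k - k_0| \approx 1/(N|\eta|) + 1/2$, the terms below it contribute $N$ each and total at most $1/|\eta| + N/2$, while the terms above form a decreasing sequence summing logarithmically to at most $|\eta|^{-1}\log(|\beta|N^2) + O(|\eta|^{-1})$. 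This yields the per-residue bound
\[
\sum_{h \in I_j} \min\{N, \|\alpha h\|^{-1}\} \leq 2N + \frac{2}{|\beta| q}\bigl(1 + \log(|\beta| N^2)\bigr).
\]
Competing with this, the pointwise bound $\min\{N, \|\alpha h\|^{-1}\} \leq N$ combined with $|I_j| \leq 2N/q + 1$ gives $\leq N + 2N^2/q \leq 2N + 2N^2/q$, which is preferable when $|\beta|N^2 \leq 1$. Taking the minimum of the two and using $1 + \log x \leq 2 + |\log x|$ for $x > 0$ combines both regimes into the single expression $2N + 2(|\beta|q)^{-1}(|\log(|\beta|N^2)| + 2)$ (or $2N + 2N^2/q$, whichever is smaller).

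Summing over the three exceptional residues multiplies this by three, producing the $6N + 6\min\{\cdots\}$ piece, which together with the generic contribution gives the claim. The main obstacle is tracking constants sharply in the single-residue analysis: one must use the refined triangle-inequality bound $|c + k\eta| \geq (|k| - 1/2)|\eta|$ rather than the crude $|k\eta|/2$ in order to reach the coefficient $2$ both in front of $N$ and in front of $(|\beta|q)^{-1}|\log(|\beta|N^2)|$, and the cutoff between the two tail regimes must be chosen precisely so that the logarithmic term comes out as $\log(|\beta|N^2)$ rather than $\log N$ or $\log(N/q)$. Everything else is routine bookkeeping.
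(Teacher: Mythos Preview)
Your proposal is correct and follows essentially the same route as the paper: split the sum by residue class modulo $q$, isolate the three residues whose contribution can land within $1/q$ of $0\in\T$, bound the generic residues by the harmonic sum giving $(4N+2q)(1+\log q)$, and analyse each exceptional residue as a one-dimensional sum $\sum_k \min\{N,\|\gamma + k\beta q\|^{-1}\}$ split at $|k|\sim 1/(N|\beta|q)$, with the trivial bound $N(2N/q+1)$ competing when $|\beta|N^2\le 1$. The only cosmetic differences are that you index residues by $j=ah\bmod q$ rather than $h\bmod q$ (equivalent since $(a;q)=1$) and that you centre the exceptional sum by shifting to $k_0$, whereas the paper reduces to $x_0=0$ by a monotonicity/symmetry argument; both approaches encounter the same wrap-around caveat for $q=1$, which the paper notes is harmless since the bound is then trivial.
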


\begin{proof}
We insert the formula $\alpha = a/q + \beta$ and rearrange the expression 
according to the residue class of $h \mod q$, giving us
\begin{align*}
& \sum_{|h| \leq N} \min\{N, \|ah/q + \beta h\|^{-1}\}
=& \sum_{c= 0}^{q-1} \sum_{\substack{|h| \leq N\\ h \equiv c \mod q}} \min\{N, \|ac/q + \beta h\|^{-1}\}.
\end{align*}

Since $|\beta| \leq \frac{1}{qN}$ and $h \leq N$, the term $\beta h$ is bounded by $1/q$ 
and since $(a;q) = 1$, there are at most three values of $c$ such that $\|ac/q + \beta h\| < 1/q$ is possible.
For the other values of $c$, we can bound the expression $\|ac/q + \beta h\|$ from below
by $d/q$ for some $1 \leq d \leq q/2$ independent of $|h| \leq N$.
Take the maximal $d$ for which the inequality holds and arrange the values of $c$ according
to these $d$-values. There are at most two values of $c$ for each $d$ and we end up with the bound
\begin{align*}
\sum_{|h| \leq N} \min\{N, \|\alpha h\|^{-1}\} & \leq  3 \sup_{0 \leq c \leq q-1}\ \sup_{x_0 \in [0,1]} 
\sum_{\substack{|h| \leq N\\ h \equiv c \mod q}} \min\{N, \|x_0 + \beta h\|^{-1}\} \\
& + 2 \cdot \frac{2N+q}{q} \sum_{1 \leq d \leq q/2}  \|d/q\|^{-1}.
\end{align*}
We use the estimate
$\sum\limits_{1 \leq d \leq q/2} d^{-1} \leq 1+ \log q$ and get for the second term
\begin{align*}
2 \cdot \frac{2N+q}{q} \sum_{1 \leq d \leq q/2}  \|d/q\|^{-1} \leq 2 \cdot (2N+q) \cdot (1 + \log q),
\end{align*}
which accounts for the second line in the estimate of Lemma \ref{Weyl-est}.
Write $h = c + lq$ such that the first expression changes into
\begin{align*}
3 \sup_{0 \leq c \leq q-1}\ \sup_{x_0 \in [0,1]} \sum_{|l+c/q| \leq N/q} \min\{N, \|x_0 + \beta ql\|^{-1}\}.
\end{align*}
Since the function $\min\{N,\|\gamma\|^{-1}\}$ is monotone for $0 \leq \gamma \leq 1/2$ and 
symmetric around the origin, we can choose $x_0 = c= 0$ by taking care of a possible boundary term
and obtain the upper bound 
\begin{align} \label{eq-l-sum}
3N + 3\sum_{|l| \leq N/q} \min\{N, \|\beta ql\|^{-1}\}.
\end{align}
There is a small problem with this argument for $q = 1$ due to 'wrap-around issues' in $\R/\Z$, but
the bound in \eqref{eq-l-sum} is trivial in this case.

We assume without loss of generality that $\beta > 0$ and
split the summation into $-1/(\beta qN) < l < 1/(\beta qN)$ 
and the positive and negative part of the sum over $1/(\beta qN) \leq |l| \leq N/q$.
The first sum gives a contribution of at most $N \cdot (2/(\beta qN) + 1)$ and the the two other sums give
\begin{align*}
\frac{2}{\beta q} \sum_{1/(\beta qN) \leq l \leq N/q} l^{-1} 
& \leq \frac{2}{\beta q} \left( |\log(N/q) - \log(1/(\beta qN))| + 1\right)\\
& = \frac{2}{\beta q} \left(|\log(\beta N^2)| + 1\right).
\end{align*}
There is also the trivial bound $N(2N/q+1)$ for the sum in \eqref{eq-l-sum}, 
which is superior for $\beta \leq 1/N^2$.
We put everything together and replace $\beta$ by $|\beta|$ to obtain the result.
\end{proof}

\section{Uniformity Estimates} \label{AppB}

Here we want to give a proof for the uniformity estimate \eqref{eq-Gow-est}.
This is a slightly different version of a result of Gowers \cite{Gow}
and can be deduced from the very general estimates in Appendix C of \cite{GrTao3}.
Since our result is a very special case with a simpler proof, we give it here
for completeness.

Define $\Delta(f;h_1,\ldots,h_d)$ inductively by
$\Delta(f;h)(x) = f(x)\overline{f(x-h)}$ and  
\begin{align*}
\Delta(f;h_1,\ldots,h_{d+1}) = \Delta(\Delta(f;h_1,\ldots,h_d);h_{d+1}).
\end{align*}
Consider
\begin{align} \label{Def-A-sum}
A(M,E,\fv) := & M^{-2} \sum_{\xv, E\xv = 0} f_1(x_1) \cdots f_{w+2}(x_{w+2}),
\end{align}
where the variables are summed over $\Z/M\Z$ for some prime $M$.

\begin{lem} \label{Uw-est}
For a matrix $E \in \Z^{w \times (w+2)}$ with columns in general position
over $\Z/M\Z$ and bounded functions $|f_i| \leq 1$, we have the bound
\begin{align*}
|A(M,E,\fv)| \leq \Big( M^{-w-2}\sum_{h_1,\ldots,h_w} \Big| \sum_{x} \Delta(f_{w+2};\hv)(x)\Big|^2 \Big)^{1/2^{w+1}}.
\end{align*}
\end{lem}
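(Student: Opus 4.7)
The plan is to prove the bound by a Gowers-Cauchy-Schwarz (generalized von Neumann) cascade that peels off $f_1,\dots,f_{w+1}$ one at a time via $w+1$ applications of the Cauchy-Schwarz inequality, leaving only iterated differences of $f_{w+2}$.

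First I would parameterize $\{\xv\in(\Z/M\Z)^{w+2}:E\xv=0\}$ by two free parameters. Since $M$ is prime and the columns of $E$ are in general position, the solution set is a two-dimensional subspace and each component can be written as $x_i=a_iy+b_iz$, where all pairs $(a_i,b_i)\in(\Z/M\Z)^2$ are non-zero and pairwise linearly independent. After an invertible linear change of variables in $(y,z)$, I may assume $L_{w+2}(y,z)=y$, so that
\begin{equation*}
A(M,E,\fv)=M^{-2}\sum_{y,z}f_{w+2}(y)\prod_{i=1}^{w+1}f_i(a_iy+b_iz),
\end{equation*}
with $b_i\neq 0$ for $1\leq i\leq w+1$.

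The $k$-th step of the cascade singles out a surviving function $f_{i_k}$ whose current linear form has non-zero coefficients in both remaining free variables. A linear change of variables makes that form one of the coordinates, after which Cauchy-Schwarz in that coordinate -- combined with the trivial bound $|f_{i_k}|\leq 1$ -- eliminates $f_{i_k}$ at the cost of doubling the sum and introducing a new shift $h_k$ in the complementary variable. This shift converts $f_{w+2}(y)$ into $\Delta(f_{w+2};\lambda_k h_k)(y)$ for some non-zero scalar $\lambda_k$ (invertible because $M$ is prime and $a_{w+2}\neq 0$), and each surviving $f_j$ into a difference $\Delta(f_j;\cdot)$ evaluated on a transformed linear form. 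A relabeling $h_k\mapsto \lambda_k^{-1}h_k$ is a bijection of $\Z/M\Z$, so we may pretend $\lambda_k=1$. General position is preserved at every step, guaranteeing that the next elimination can proceed.

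After $w+1$ iterations, the functions $f_1,\dots,f_{w+1}$ have all been eliminated and the inequality reads
\begin{equation*}
|A|^{2^{w+1}}\leq M^{-(w+2)}\sum_{h_1,\dots,h_{w+1}}\sum_{x}\Delta(f_{w+2};h_1,\dots,h_{w+1})(x).
\end{equation*}
Rewriting the $h_{w+1}$-sum as $\bigl|\sum_x\Delta(f_{w+2};h_1,\dots,h_w)(x)\bigr|^2$ via the change of variables $x\mapsto x-h_{w+1}$ gives the claimed bound. The main obstacle is the bookkeeping of how each coefficient pair $(a_i,b_i)$ and each shift transform through the cascade; I would sidestep this by organizing the argument as an induction on $w$, in which each Cauchy-Schwarz step reduces the problem to an instance of the same lemma with parameter $w-1$ and with $f_{w+2}$ replaced by $\Delta(f_{w+2};h_k)$ applied to a shifted argument. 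The base case $w=0$ reduces to a single Cauchy-Schwarz in one variable.
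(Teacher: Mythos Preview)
Your proposal is correct and follows essentially the same Cauchy--Schwarz cascade as the paper's proof. The only cosmetic difference is that you parameterize the two-dimensional solution set $\{E\xv=0\}$ by free variables $(y,z)$ from the outset, whereas the paper keeps the constraint $E\xv=0$ explicit and observes after each Cauchy--Schwarz step that the differences $x_i-y_i$ are uniquely determined by $h_k=x_{w+2}-y_{w+2}$ (which is exactly your shift in the complementary variable); both viewpoints yield the same iterated $\Delta$-operator on $f_{w+2}$ and the same exponent $1/2^{w+1}$.
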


\begin{rem}
There is nothing special about $f_{w+2}$ here except for simplicity of notation.
\end{rem}

To simplify the exposition of the proof, we don't write down all the normalisation constants $M^{-t}$,
where $t$ is the number of free variables in the summation. 
We write $\leq_M$ instead of $\leq$ to say that the estimate holds
up to a power of $M$ that has the right order of magnitude.

\begin{proof}
Since $|f_1| \leq 1$ we can estimate $A(M,E,\fv)$ by
\begin{align*}
A(M,E,\fv) & \leq_M \sum_{x_1} \Big| \sum_{E\xv = 0}^* f_2(x_2) \cdots f_{w+2}(x_{w+2}) \Big| \\
& \leq_M \Big(\sum_{x_1} \Big|\sum_{E\xv = 0}^* f_2(x_2) \cdots f_{w+2}(x_{w+2}) \Big|^2 \Big)^{1/2},
\end{align*}
where the star $*$ indicates that the inner sums run only over $x_2,\ldots,x_s$.
With $y_1=x_1$ we can write the summation inside the square root as
\begin{align*}
\sum_{x_1} 
\sum_{E\xv = 0}^* \sum_{E\yv = 0}^* f_2(x_2) \cdots f_{w+2}(x_{w+2})
\overline{f_2(y_2) \cdots f_{w+2}(y_{w+2})}.
\end{align*}
Define $h_1 := x_{w+2} - y_{w+2}$ and observe that from $E\xv=0$ and $E\yv = 0$
we get $E(\xv-\yv) = 0$. By the definition of $h_1$ we obtain that 
\begin{align*}
E \cdot (0,x_2-y_2,\ldots,x_{w+1}-y_{w+1},h_1)^T = \nullv.
\end{align*}
This equation uniquely determines the differences $x_i-y_i$ once $h_1$ is given
and, therefore, $y_i$ in terms of $x_i$ and $h_1$.
Denote this unique solution by $y_{x_i,h_1}$ and set $F_i(h_1;x_i) = f_i(x_i)\overline{f_i(y_{x_i,h_1})}$. 
Then we can write the sum in the form
\begin{align*}
\sum_{h_1} \sum_{\xv, E\xv = 0}  F_2(h_1;x_2) \cdots F_{w+1}(h_1;x_{w+1}) \Delta(f_{w+2};h_1)(x_{w+2}).
\end{align*}
Since this is just an average of another version of the original sum \eqref{Def-A-sum}
with $f_1$ removed, we can use the estimate
\begin{align*}
\sum_{h_1} \Big( \sum ...\Big) \leq_M \Big(\sum_{h_1} \Big| \sum ...\Big|^2 \Big)^{1/2}
\end{align*}
to apply the above procedure inductively to the inner sum.
After $w$ steps we have removed all functions but $\Delta(f_{w+2};h_1,\ldots,h_{w})$
and some function $F_{\hv}(x_{w+1})$. We are left with
\begin{align*}
A(M,E,\fv) \leq_M \Big( \sum_{h_1,\ldots,h_{w}} 
\sum_{\xv, E\xv = 0} F_{\hv}(x_{w+1}) \Delta(f_{w+2};\hv)(x_{w+2}) \Big)^{1/2^w}.
\end{align*}
Now we evaluate the sum over the variables $x_1,\ldots,x_w$, since their values are given once we know
the values of $x_{w+1}$ and $x_{w+2}$.
Having done this, we may rearrange the summation a last time and proceed by another 
application of the Cauchy-Schwarz-inequality and $|F_{\hv}(x_{w+1})| \leq 1$ to obtain
\begin{align*}
A(M,E,\fv) & \leq_M \Bigg( \sum_{h_1,\ldots,h_{w}} 
\Big( \sum_{x_{w+1}} 1 \Big) \Big| \sum_{x_{w+2}} \Delta(f_{w+2};\hv)(x_{w+2}) \Big| \Bigg)^{1/2^w}\\
& \leq_M 
\Big( \sum_{h_1,\ldots,h_w} \Big| \sum_{x_{w+2}} \Delta(f_{w+2};\hv)(x_{w+2})\Big|^2 \Big)^{1/2^{w+1}},
\end{align*}
which is the estimate in Lemma \ref{Uw-est} up to normalisation.
\end{proof}

\section{$L^p$-Estimates for Quadratic Exponential Sums}  \label{AppC}

In our recent work \cite{Keil}, we reproved a result of Bourgain \cite{Bour2}, 
namely the following estimate for the two-dimensional quadratic exponential sum.

\begin{thm} \label{VA-est}
Let $V_g(\alpha,\beta)$ be defined as in \eqref{eq-Vg} for a function $g$ with $|g(n)| \leq 1$. 
Then for $p > 6$, we have
\begin{align*}
\int_{\T^2} |V_g(\alpha,\beta)|^p \,d\alpha d\beta \ll_p N^{p-3}.
\end{align*}
\end{thm}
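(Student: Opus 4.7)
The plan is to reduce the $L^p$-estimate to a weighted count of integer solutions of a two-dimensional Vinogradov system and then extrapolate from a sharp even-integer endpoint.

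First, I would use orthogonality. For $p = 2k$ an even integer, expanding $|V_g(\alpha,\beta)|^{2k}$ and integrating over $\T^2$ gives
\begin{align*}
\int_{\T^2} |V_g(\alpha,\beta)|^{2k}\, d\alpha\, d\beta
= \sum_{\substack{\mathbf{x}, \mathbf{y} \in [1,N]^k \\ \sum x_i = \sum y_i \\ \sum x_i^2 = \sum y_i^2}} \prod_{i=1}^{k} g(x_i)\overline{g(y_i)},
\end{align*}
which, since $|g| \leq 1$, is bounded by the count $J_k(N)$ of integer solutions in $[1,N]^{2k}$.

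Next I would establish the sharp bound $J_4(N) \ll N^5$ at $p = 8$. Setting $h_i = x_i - y_i$, the linear constraint forces $h_4 = -(h_1+h_2+h_3)$, and the quadratic constraint becomes $\sum h_i(x_i+y_i) = 0$, a linear relation in the $y_i$ once the $x_i$ and $h_i$ are fixed. Counting admissible configurations via a Hardy--Littlewood major/minor-arc dissection --- the major arcs yielding the expected $N^5$ main term and the minor arcs controlled by Weyl cancellation --- gives the log-free bound.

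With $\int_{\T^2} |V_g|^8 \ll N^5$ in hand, the case $p \geq 8$ is immediate from the pointwise bound $|V_g| \leq N$:
\begin{align*}
\int_{\T^2} |V_g|^p \,d\alpha\,d\beta \leq N^{p-8} \int_{\T^2} |V_g|^8 \,d\alpha\,d\beta \ll N^{p-3}.
\end{align*}
For $6 < p < 8$, I would proceed by a dyadic level-set argument using the $L^8$-bound together with the $L^2$-bound $\int |V_g|^2 \ll N$, optimising the crossover threshold, and finally interpolating with a directly-proved sharp $L^{q_0}$-estimate at some $q_0 \in (6,p]$ via the same circle-method machinery applied at the non-integer exponent.

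The main obstacle I anticipate is producing a log-free bound in the critical range $p \in (6, 8)$. Naive moment interpolation between the $L^8$ bound and the natural $L^6$ bound $\int |V_g|^6 \ll N^3 \log^{O(1)} N$ (arising from $J_3(N) \ll N^3 \log^{O(1)} N$, an essentially unavoidable boundary phenomenon in the discrete setting) yields only $\int |V_g|^p \ll N^{p-3} \log^{O(1)} N$. Eliminating this logarithmic factor in the strict range $p > 6$ is the technical core of the proof and really requires a restriction-type refinement in the spirit of Bourgain \cite{Bour2} rather than interpolation alone.
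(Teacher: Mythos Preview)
Your proposal handles $p \geq 8$, but in the decisive range $6 < p < 8$ it is not a proof: you correctly diagnose that interpolation between $L^6$ and $L^8$ loses a logarithm and correctly name the cure (a Bourgain-type restriction argument), but you do not carry it out. Since removing that logarithm is the whole content of the theorem, what you have written is an outline that stops precisely at the hard step. (A minor aside: your differencing sketch for $J_4(N) \ll N^5$ is also garbled---once $x_i$ and $h_i = x_i - y_i$ are both fixed, $y_i$ is already determined, so ``a linear relation in the $y_i$ once the $x_i$ and $h_i$ are fixed'' is vacuous. The bound is true but needs a genuine circle-method computation.)

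The paper itself merely cites \cite{Keil} for this theorem, but the method used there is exactly the restriction machinery you gesture at in your final sentence, and it is worked out in full in this same appendix for the one-dimensional analogue, Theorem~\ref{SA-est}. The device is Theorem~\ref{Lp-est}: one writes a power of the exponential sum as $W_f = \sum_m f(m)\omega(m)e(\alpha \cdot m)$, decomposes the weight $\omega = \sum_Y \omega_Y + \omega'$ via a dyadic major-arc dissection, and bounds $\|W_f\|_r$ by $\sum_Y \|W_Y\|_r^{(r-2)/r}\|\omega_Y\|_{2r/(r-2)}^{2/r}$. The Proposition following Theorem~\ref{Lp-est} shows these pieces decay geometrically in $Y$, so the sum converges for every $r > 2$ with no logarithm---the log-free bound comes out directly, not by interpolation. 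For $V_g$ one runs the identical scheme with $V_g^3$ in place of $U_g^2$, the two-dimensional weight $\omega(m_1,m_2) = \#\{\mathbf{n} \in [1,N]^3: \sum n_i = m_1,\ \sum n_i^2 = m_2\}$, and $r = p/3 > 2$.
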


\begin{proof}
This is Theorem 2.1 from \cite{Keil}.
\end{proof}

Note that it is easy to get this result with $N^{p-3}$ replaced by $N^{p-3}\log N$.
In this appendix we use the same technique to 
prove a $L^p$-estimate for the corresponding one-dimensional exponential sum
\begin{align*}
U_g(\alpha) = \sum_{n \leq N} g(n) e(\alpha n^2),
\end{align*}
where $g: \N \to \C$ is any function with $|g(n)| \leq 1$.

\begin{thm} \label{SA-est}
For $p > 4$, we have
\begin{align*}
\int_{\T} |U_g(\alpha)|^p \,d\alpha \ll_p N^{p-2}.
\end{align*}
\end{thm}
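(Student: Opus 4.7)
The plan is to mirror the proof of Theorem~\ref{VA-est} from Appendix~C of \cite{Keil}, now applied to the one-dimensional quadratic exponential sum $U_g(\alpha)$. The drop of one auxiliary variable is reflected in the change of threshold from $p > 6$ to $p > 4$ and of target exponent from $N^{p-3}$ to $N^{p-2}$.

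First I would establish a Hua-type base $L^4$ estimate. Expanding the fourth power and using $|g| \leq 1$ gives
\[
\int_\T |U_g(\alpha)|^4 \, d\alpha \leq \#\{(n_1, n_2, m_1, m_2) \in [1,N]^4 : n_1^2 + n_2^2 = m_1^2 + m_2^2\},
\]
and grouping solutions by $k = n_1^2 + n_2^2$ then applying the divisor bound $r_2(k) \ll k^\epsilon$ together with $\sum_k r_2(k) \leq N^2$ yields $\int_\T |U_g|^4 \,d\alpha \ll N^2 \log N$.

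Second, by Dirichlet's theorem, every $\alpha \in \T$ can be written as $\alpha = a/q + \beta$ with $(a, q) = 1$, $1 \leq q \leq N$ and $|\beta| \leq 1/(qN)$. Following the Weyl differencing step used in the proof of Theorem~\ref{Lp-thm1} and invoking Lemma~\ref{Weyl-est} gives control of the quadratic exponential sum in terms of the function $K(\alpha)$ from \eqref{eq-K}, while Lemma~\ref{K-bound} provides the mean-value bound $\int_\T K(\alpha)^p \,d\alpha \ll_p N^{p-2}$ for $p > 4$.

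Third, for $p > 4$, I would combine these two inputs by Hölder's inequality, splitting $|U_g|^p = |U_g|^{p-4}\cdot |U_g|^4$, bounding the first factor by a power of $K(\alpha)$, and integrating against $|U_g|^4$. A matched choice of interpolation exponents between the Hua bound and Lemma~\ref{K-bound} then yields $\int_\T |U_g(\alpha)|^p \,d\alpha \ll_p N^{p-2}$, with the logarithmic factor from Hua absorbed by the power savings coming from the strict inequality $p > 4$.

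The main obstacle, already present in the $V_g$-case, is the absence of a clean pointwise bound $|U_g(\alpha)| \ll K(\alpha)$ for arbitrary $g$ with $|g| \leq 1$: Weyl differencing introduces the inner sum $T_h(\alpha) = \sum_m g(m+h)\overline{g(m)} e(2\alpha h m)$, whose bounded but arbitrary coefficients $g(m+h)\overline{g(m)}$ can cancel the oscillation $e(2\alpha h m)$, so the $\min\{N, \|2\alpha h\|^{-1}\}$ estimate that works for $g \equiv 1$ is not directly available. The argument therefore proceeds through $L^p$-averaged control, combining the Hua-type $L^4$ bound with Lemma~\ref{K-bound} via an interpolation that relies crucially on $p > 4$ being strict.
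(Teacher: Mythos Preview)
Your proposal has a genuine gap: the interpolation you describe cannot remove the logarithm. The Hua-type estimate $\int_\T |U_g|^4\,d\alpha \ll N^2\log N$ is correct, but to pass from it to $\int_\T |U_g|^p\,d\alpha \ll_p N^{p-2}$ via H\"older you would need a pointwise bound $|U_g(\alpha)|\ll K(\alpha)$, and---as you yourself observe---this fails for arbitrary bounded $g$. Without such a relationship, Lemma~\ref{K-bound} is simply unrelated to $U_g$, and the best one can extract from the $L^4$ estimate together with the trivial bound $|U_g|\le N$ is
\[
\int_\T |U_g|^p\,d\alpha \le \|U_g\|_\infty^{p-4}\int_\T |U_g|^4\,d\alpha \ll N^{p-2}\log N.
\]
Your final paragraph's appeal to ``$L^p$-averaged control'' does not specify any mechanism that does better, and the strict inequality $p>4$ does not by itself absorb a logarithm when the only inputs are an endpoint $L^4$ bound carrying a $\log N$ and a trivial $L^\infty$ bound. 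Indeed, the paper remarks immediately after Theorem~\ref{VA-est} that it is \emph{easy} to obtain the result with an extra $\log N$; the whole content of Theorem~\ref{SA-est} is precisely the removal of that logarithm.

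The paper's argument is of a different nature. It writes $U_g^2$ as a weighted linear exponential sum $W_f(\alpha)=\sum_m f(m)\omega(m)e(\alpha m)$, where $\omega(m)$ is the representation function for sums of two squares, and then applies the abstract restriction inequality Theorem~\ref{Lp-est} (a $TT^*$/Tomas--Stein type estimate from \cite{Keil}). The key analytic input is the Proposition in Appendix~\ref{AppC}, which performs a major/minor-arc decomposition $\omega=\sum_{Y\in J}\omega_Y+\omega'$ and supplies matched bounds $\|W_Y\|_r \ll (N^2/Y)^{(r-2)/r}N^{2/r}$ and $\|\omega_Y\|_{2r/(r-2)}\ll_\epsilon N^{(r-2)/r}Y^\epsilon$; the geometric decay in $Y$ on the $W_Y$ side is what makes the sum over dyadic scales converge and kills the would-be logarithm. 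This restriction-theoretic decomposition, not interpolation against $K(\alpha)$, is the idea missing from your sketch.
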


The proof is a simplified version of the proof in \cite{Keil} and we use 
a general result (Theorem \ref{Lp-est} below) on $L^p$-estimates from our previous work.
First we need some notation. Consider the square
\begin{align*}
U_g(\alpha)^2 = \sum_{n_1,n_2 \leq N} g(n_1) g(n_2) e(\alpha (n_1^2 + n_2^2)).
\end{align*}
With $\omega(m) = \#\{n_1,n_2 \leq N: m = n_1^2 + n_2^2\}$ we can write
\begin{align*}
f(m)\omega(m) = \sum_{\substack{n_1,n_2 \leq N \\ n_1^2 + n_2^2 = m}} g(n_1) g(n_2)
\end{align*}
for some function $|f| \leq 1$. This leads to the definition of
\begin{align*}
W_f(\alpha) = U_g(\alpha)^2 = \sum_{m \leq 2N^2} f(m)\omega(m) e(\alpha m).
\end{align*}
For $f \equiv 1$ we get $W(\alpha)$. For an index set $J$ decompose
\begin{align*}
W(\alpha) = \sum_{j \in J} W_j(\alpha) \quad \mbox{ and } \quad
\omega(m) = \sum_{j \in J} \omega_j(m),
\end{align*} 
where $W_j$ is the exponential sum for $\omega_j$.
Define the $L^p$-norms as usual by
\begin{align*}
\|\omega\|_p := \Big(\sum_{m \leq 2N^2} |\omega(m)|^p \Big)^{1/p} \mbox{\ and \ } 
\|W\|_p := \Big(\int_{\T} |W(\alpha)|^p \,d\alpha \Big)^{1/p}_.
\end{align*} 
Now we can state the auxiliary result.

\begin{thm} \label{Lp-est}
For $p > 2$, $N \in \N$ and any $f: \N \to \C$ we have
\begin{align*}
\|W_f\|_p \leq \Big(\sum_{m \leq 2N^2} |f(m)|^2 \omega(m) \Big)^{1/2} 
\Big(\sum_{j \in J} \|W_j\|_{p}^{(p-2)/p} \|\omega_j\|^{2/p}_{2p/(p-2)} \Big)^{1/2}_.
\end{align*} 
\end{thm}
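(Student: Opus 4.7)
The plan is a $TT^*$ argument combined with a pointwise interpolation between $L^2$ and $L^\infty$ bounds on each piece $W_j * g$. I introduce the linear operator $T : \ell^2(\omega) \to L^p(\T)$ defined by $(Tf)(\alpha) = W_f(\alpha)$, where the domain carries the weighted norm $\|f\|_{L^2(\omega)}^2 = \sum_m |f(m)|^2 \omega(m)$. The theorem is equivalent to the operator norm bound $\|T\| \leq C$, where $C$ denotes the square root of the sum on the right-hand side of the statement. By the standard $TT^*$ identity one has $\|T\|^2 = \|TT^*\|_{p' \to p}$, and a short computation using Plancherel shows $(T^*g)(m) = \hat{g}(m)$, whence $(TT^* g)(\alpha) = (W * g)(\alpha)$. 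It therefore suffices to prove $\|W * g\|_p \leq C^2 \|g\|_{p'}$ for every $g \in L^{p'}(\T)$.

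Using the given decomposition $W = \sum_{j \in J} W_j$ together with the triangle inequality, this reduces to showing
\[
\|W_j * g\|_p \leq \|\omega_j\|_q^{2/p} \|W_j\|_p^{(p-2)/p} \|g\|_{p'}, \qquad q := 2p/(p-2),
\]
for each $j \in J$. For the $L^\infty$ endpoint, H\"older gives $\|W_j * g\|_\infty \leq \|W_j\|_p \|g\|_{p'}$. For the $L^2$ endpoint, the Fourier coefficients of $W_j * g$ are exactly $\omega_j(m)\hat{g}(m)$, so Parseval yields $\|W_j * g\|_2^2 = \sum_m \omega_j(m)^2 |\hat{g}(m)|^2$. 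Applying H\"older in $m$ with conjugate exponents $p/(p-2)$ and $p/2$ bounds this by $\|\omega_j\|_q^2 \cdot \|\hat{g}\|_p^2$, and Hausdorff--Young (valid because $p \geq 2$) gives $\|\hat{g}\|_p \leq \|g\|_{p'}$. Altogether $\|W_j * g\|_2 \leq \|\omega_j\|_q \|g\|_{p'}$. The log-convexity inequality $\|h\|_p \leq \|h\|_2^{2/p}\|h\|_\infty^{1-2/p}$ for $p \geq 2$ then combines the two endpoint bounds into the displayed inequality.

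The main substance of the argument lies in the $L^2$ estimate: the specific exponent $q = 2p/(p-2)$ is dictated by the requirement that H\"older feed Hausdorff--Young correctly, and it is precisely this choice that makes the bound sharper than the naive Young-type estimate $\|W_j * g\|_p \leq \|W_j\|_{p/2}\|g\|_{p'}$. Once the right exponents are identified, the remaining steps are routine. Summing over $j \in J$ and taking the supremum over $\|g\|_{p'} \leq 1$ gives $\|T\|^2 \leq \sum_j \|\omega_j\|_q^{2/p} \|W_j\|_p^{(p-2)/p}$, which is the desired bound.
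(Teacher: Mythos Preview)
Your proof is correct. The paper itself does not prove this statement here; it simply cites it as a special case of Theorem~4.1 from the author's earlier paper \cite{Keil}. Your $TT^*$ argument---computing $TT^*g = W*g$, splitting $W = \sum_j W_j$, and interpolating between the H\"older bound $\|W_j*g\|_\infty \le \|W_j\|_p\|g\|_{p'}$ and the Parseval/Hausdorff--Young bound $\|W_j*g\|_2 \le \|\omega_j\|_{2p/(p-2)}\|g\|_{p'}$---is exactly the restriction-theoretic approach the paper alludes to when it directs readers to \cite{Keil} for ``the restriction theory part of the argument,'' so it is almost certainly the same proof as the one being cited.
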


\begin{proof}
This is a special case of Theorem 4.1 from \cite{Keil}.
\end{proof}

The first factor is just a weighted $L^2$-norm of $f$ and is easily estimated in our context.
The second factor needs much more attention and we will perform a variant of 
the major-minor-arc decomposition from the circle method.
For small values of $j$ we have the big major-arc contribitions in $\|W_j\|_{p}$
but the arithmetic counterparts $\omega_j$ are very regular `almost periodic functions'.
For larger values of $j$, the random fluctuations in $\omega_j$ contribute more and 
more to the sum, but are balanced by the savings on the side of the exponential sums $W_j$.
The proposition below gives a quantitative version of these qualitative description.

First we need some notation. Define the local versions of $U(\alpha)$ to be
\begin{align*}
U(q,a) = \sum_{b = 1}^q e(ab^2/q)   \mbox{\quad and \ } v(\alpha) = \int_1^N e(\alpha t^2)\, dt
\end{align*}
and set the major arcs $\Ma$ to be the union of 
\begin{align} \label{eq-Uj}
\Ma(q,a) = \{\beta \in \T: \|\beta - a/q\| \leq Q/N^2 \}.
\end{align}
for $1 \leq a \leq q, (a;q) = 1$ and $q \leq 4Q$. Note that they are disjoint for 
$4Q \leq N^{2/3}$ and we set $Q$ to be a small power of $N$ with $16Q^{6} \leq N$ later.
Define for $Y \leq 2Q$ a dyadic part of the usual major arcs approximation
for quadratic exponential sums (see \cite[Theorem 7.2]{Vau})
\begin{align} \label{eq-Uj}
\Ua_Y(\alpha) := \sum_{Y \leq q < 2Y} q^{-2} \sum_{(a;q) = 1} U(q,a)^2 v(\alpha-a/q)^2 \Big|_{\Ma(q,a)}.
\end{align}
We take the Fourier transform and obtain the arithmetic functions
\begin{align*}
\omega_Y(m) = & \int_{\T} \Ua_Y(\alpha) e(-\alpha m) \,d\alpha \\
= & \sum_{Y \leq q < 2Y} q^{-2} \sum_{(a;q) = 1} U(q,a)^2 e(-am/q) 
\int_{|\beta| \leq Q/N^2} v(\beta)^2 e(-\beta m) \,d\beta,
\end{align*}
which we restrict to $1 \leq m \leq 2N^2$. Set $\omega_Y(m) = 0$ for other values of $m$.
Write $W(\alpha)$ for $U^2(\alpha)$
and define the corresponding exponential sums for $\omega_Y(m)$ as
\begin{align*}
W_Y(\alpha) = \sum_{m \leq 2N^2} \omega_Y(m) e(\alpha m).
\end{align*}
By inserting the definition of $\omega_Y(m)$ we see that $W_Y(\alpha)$ and $\Ua_Y(\alpha)$
are related by the formula
\begin{align} \label{eq-Tint}
W_Y(\alpha) = \int_{\T} \Ua_Y(\beta) L_{2N^2}(\alpha-\beta) \,d\beta,
\end{align}
where $L_M(\alpha) = \sum_{n \leq M} e(\alpha n)$ is the linear exponential sum.

Before we state the main proposition of this section, we set $J$ to be the set
$J = \{1,2,4,\ldots, 2^{D-1},2^D\}$ with $D \in \N$ between $\log_2 Q$ and $1 + \log_2 Q$.
Decompose the function $W$ and $U^2$ into
\begin{align*} 
U^2(\alpha) = \sum_{Y \in J} \Ua_Y(\alpha) +  \Ua'(\alpha) \mbox{\quad and \quad}
W(\alpha) =  \sum_{Y \in J} W_Y(\alpha) +  W'(\alpha).
\end{align*}
One can think of $\Ua'$ as the minor-arc contribution which also contains the approximation error on the major arcs.
Define $\omega'$ as the arithmetic function that belongs to $W'$. 

\begin{proposition}
For $Y \leq 2Q$ we have the estimates
\begin{align*}
\int_{\T} |W_Y(\alpha)|^2 \,d\alpha  & \ll  N^2, \qquad\ \int_{\T} |W'(\alpha)|^2 \,d\alpha \ll N^{2+\epsilon},\\
 \sup_{\alpha} |W_Y(\alpha)| & \ll N^2 Y^{-1}, \qquad \sup_{\alpha} |W'(\alpha)| \ll N^{2+\epsilon}Q^{-1}.
\end{align*}
For each $k \in \N$ with $Q^{8k} \leq N$ we have
\begin{align*} 
\sum_{m \leq 2N^2} |\omega_Y(m)|^{2k} \ll_{\epsilon,k} Y^{\epsilon} N^2 \mbox{\quad and \ } 
\sum_{m \leq 2N^2} |\omega'(m)|^{2k} \ll_{\epsilon,k} N^{2+\epsilon}.
\end{align*}
\end{proposition}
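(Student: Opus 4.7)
My plan is to reduce the six bounds to two explicit computations: pointwise and $L^2$-control of $\Ua_Y$ (which has a simple major-arc formula), together with Weyl-type control of the minor-arc residual $\Ua'$. The bounds on $W_Y$ and $W'$ are then transferred from $\Ua_Y$ and $\Ua'$ via the convolution identity \eqref{eq-Tint}, and the higher moments of $\omega_Y$ and $\omega'$ follow by combining the $L^2$ control with a pointwise bound on the Fourier coefficients.

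For the bounds on $\Ua_Y$ and $W_Y$, the starting point is the pointwise inequality $|\Ua_Y(\alpha)| \leq 4q^{-1}N^2 \ll N^2/Y$ on each $\Ma(q,a)$ with $Y \leq q < 2Y$, which is immediate from the Gauss sum bound $|U(q,a)|^2 \leq 4q$ and $|v(\beta)| \leq N$. For the $L^2$-norm, disjointness of the arcs gives
\begin{align*}
\int_{\T} |\Ua_Y(\alpha)|^2 \,d\alpha = \sum_{Y \leq q < 2Y} q^{-4} \sum_{(a;q)=1}|U(q,a)|^4 \int_{|\beta|\leq Q/N^2}|v(\beta)|^4 \,d\beta,
\end{align*}
and combined with $|U(q,a)|^4 \leq 16q^2$ and the bound $\int|v|^4 \ll N^2$ (from $|v|^4 \ll \min\{N^4,|\beta|^{-2}\}$), this is $\ll N^2$. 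Parseval then yields $\int|W_Y|^2 = \sum_m |\omega_Y(m)|^2 \leq \int|\Ua_Y|^2 \ll N^2$. The sup-norm transfer from $\Ua_Y$ to $W_Y$ uses \eqref{eq-Tint}: since $v(\beta)^2$ is itself the Fourier transform of the representation-density function supported in $[2, 2N^2]$, the convolution with $L_{2N^2}$ essentially recovers $\Ua_Y$ up to boundary effects, yielding $\sup|W_Y| \ll N^2/Y$.

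The moment bound on $\omega_Y$ follows from the interpolation
\begin{align*}
\sum_m |\omega_Y(m)|^{2k} \leq \sup_m |\omega_Y(m)|^{2k-2} \cdot \sum_m |\omega_Y(m)|^2
\end{align*}
combined with a pointwise estimate $|\omega_Y(m)| \ll_\epsilon Y^{\epsilon}$ in the range $Q^{8k} \leq N$. For this, the arithmetic factor $A_q(m) := \sum_{(a;q)=1}U(q,a)^2 e(-am/q)$ reduces via $U(q,a)^2 = \epsilon_q^2 q$ (for $q$ odd, with an analogous treatment for $q$ even) to a Ramanujan sum bounded by $q(q,m)$, and summing dyadically in $q$ gives $\sum_{Y\leq q<2Y} q^{-2}|A_q(m)| \ll \#\{d \mid m : d \leq 2Y\}$. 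The analytic factor $I(m)$ is uniformly $O(\log N)$ by splitting $|\beta| \lessgtr N^{-2}$ and using $|v|^2 \ll \min\{N^2,|\beta|^{-1}\}$.

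The minor-arc bounds on $W'$ and $\omega'$ follow from Weyl's inequality for $U(\alpha)$: outside the major arcs one has $|U(\alpha)|^2 \ll N^{2+\epsilon}/Q$, and on the major arcs the error $U^2 - \Ua^\sharp$ (with $\Ua^\sharp := \sum_{Y\in J}\Ua_Y$) matches this bound by the standard approximation \cite[Theorem 7.2]{Vau}. Transfer through \eqref{eq-Tint} then gives $\sup|W'| \ll N^{2+\epsilon}/Q$. The $L^2$-bound $\int|W'|^2 \ll N^{2+\epsilon}$ follows from Theorem \ref{VA-est} with $g\equiv 1$ (equivalently, the divisor bound $\sum_m \omega(m)^2 \ll N^{2+\epsilon}$), and the high moments from the same interpolation as for $\omega_Y$, using $\omega(m) \ll m^\epsilon$. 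The main obstacle is achieving the sharp transfer from $\Ua_Y$ to $W_Y$ without picking up a logarithmic loss from the Dirichlet kernel; exploiting the near-bandlimitedness of $v(\beta)^2$ is essential here. A secondary technical point is verifying that the divisor-counting bound $\#\{d \mid m : d \leq 2Y\}$ really translates to the claimed $Y^\epsilon$ dependence (rather than merely $N^\epsilon$) in the final moment estimate under the constraint $Q^{8k} \leq N$.
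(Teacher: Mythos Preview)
Your outline correctly handles the $L^2$ bounds on $W_Y$ and $W'$, the sup bound on $W'$, and the moment bound on $\omega'$ essentially as the paper does. There are, however, two genuine gaps, and you flag both yourself without resolving them.

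\textbf{The sup bound $\sup_\alpha |W_Y(\alpha)| \ll N^2/Y$.} Your ``near-bandlimitedness'' heuristic does not yield a proof. While $v(\beta)^2$ on $\R$ has Fourier support in $[2,2N^2]$, the function $\Ua_Y$ involves the \emph{restriction} of each $v(\cdot-a/q)^2$ to the arc $\Ma(q,a)$, and this truncation destroys the band-limitation; the naive route through \eqref{eq-Tint} and $\|L_{2N^2}\|_1 \ll \log N$ loses a logarithm, which is exactly what the Proposition cannot afford. The paper's argument is different and concrete: after $|U(q,a)|^2 \ll q$ one must bound, for each pair $(q,a)$, the integral $\int_{\Ma(q,a)}|v(\beta-a/q)|^2|L_{2N^2}(\alpha-\beta)|\,d\beta$. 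By the spacing of Farey fractions and the constraint $16Q^6 \leq N$, there is at most one pair with $\|a/q-\alpha\| \leq 2Q^4/N^2$; for that pair Cauchy--Schwarz combined with $\int|v|^4 \ll N^2$ and $\int|L_{2N^2}|^2 = 2N^2$ gives the integral $\ll N^2$. For every other pair one has $|L_{2N^2}(\alpha-\beta)| \leq N^2/Q^4$ throughout $\Ma(q,a)$, and the trivial bound $|v|\leq N$ together with $\mu(\Ma(q,a)) \ll Q/N^2$ makes the total far contribution acceptable after summing over $O(Y^2)$ pairs.

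\textbf{The moment bound $\sum_m |\omega_Y(m)|^{2k} \ll Y^\epsilon N^2$.} Your interpolation $\sup^{2k-2}\times L^2$ cannot deliver the stated $Y^\epsilon$: the divisor count $\#\{d\mid m: d\leq 2Y\}$ is only $\ll_\epsilon m^\epsilon \ll N^\epsilon$ uniformly in $m\leq 2N^2$, not $\ll Y^\epsilon$ (think of $m$ a product of many small primes), so your route gives at best $\sum_m|\omega_Y(m)|^{2k} \ll N^{2+\epsilon}$. The paper instead exploits the exact factorisation $\omega_Y(m) = A_Y(m)\,I(m)$, where the analytic integral $I(m)=\int_{|\beta|\leq Q/N^2}v(\beta)^2e(-\beta m)\,d\beta$ is independent of $q$ and $A_Y(m)$ is the dyadic sum over $q$ of $q^{-2}\sum_{(a;q)=1}U(q,a)^2e(-am/q)$. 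Cauchy--Schwarz on the $2k$-th moment reduces to $4k$-th moments of each factor; Hausdorff--Young gives $\sum_m|I(m)|^{4k}\ll N^2$, and the arithmetic moment $\sum_m|A_Y(m)|^{4k}\ll N^2Y^\epsilon$ for $Y^{8k}\leq 2N^2$ is taken from \cite[Lemma~5.3]{Keil}. This lemma is where the $Y^\epsilon$ dependence genuinely originates, and it is not accessible through a pointwise bound on $\omega_Y$.
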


\begin{proof}
We go through the estimates one by one.
For the first one, we observe that by \eqref{eq-Tint} the function $W_Y$ is a projection of $\Ua_Y$ and, 
therefore, by Bessel's inequality, we have the upper bound
\begin{align*}
\int_{\T} |W_Y(\alpha)|^2 \,d\alpha \leq \int_{\T} |\Ua_Y(\beta)|^2 \,d\beta,
\end{align*}
which can also be established directly. 
Insert the definition of $\Ua_Y$ from \eqref{eq-Uj} and expand. We obtain due to the disjointness 
of the sets $\Ma(q,a)$ the evaluation
\begin{align*}
\int_{\T} |\Ua_Y(\beta)|^2 \,d\beta 
= \sum_{Y \leq q < 2Y} q^{-4} \sum_{(a;q) = 1} |U(q,a)|^4 \int_{\Ma(q,a)} |v(\alpha-a/q)|^4 \,d\alpha.
\end{align*}
By the well known estimates $|U(q,a)| \ll q^{1/2}$ (see for example Lemma A.5 in \cite{Keil}) 
and $|v(\beta)| \ll N(1 + |\beta|N^2)^{-1/2}$
(see \cite[Theorem 7.3]{Vau}), we obtain the claim by a straightforward calculation.\\ 
The $L^2$-bound for $W'$ follows from the previous bound since by Parseval's identity
and $\omega(m) \ll_{\epsilon} m^{\epsilon}$ we get
\begin{align*}
\int_{\T} |W(\alpha)|^2 \,d\alpha = \sum_{m \leq 2N^2} \omega(m)^2 \ll N^{2+\epsilon}.
\end{align*}

The $L^{\infty}$-estimate for $W_Y$ starts with \eqref{eq-Uj} and \eqref{eq-Tint} and gives us
\begin{align*}
|W_Y(\alpha)|  & \leq  
\sum_{Y \leq q < 2Y} q^{-2} \sum_{(a;q) = 1} |U(q,a)|^2 \int_{\Ma(q,a)} |v(\beta-a/q)|^2 |L_{2N^2}(\alpha-\beta)| \,d\beta\\
& \leq Y^{-1} \sum_{Y \leq q < 2Y} \sum_{(a;q) = 1} \int_{\Ma(q,a)} |v(\beta-a/q)|^2 |L_{2N^2}(\alpha-\beta)| \,d\beta
\end{align*}
using again the estimate $|U(q,a)| \ll q^{1/2}$.
For a given pair of $q$ and $a$ we can estimate the inner integral by Cauchy's inequality.
The two resulting integrals can be dealt with the estimate $|v(\beta)| \ll N(1 + |\beta|N^2)^{-1/2}$
and Parseval's identity to obtain the bound
\begin{align*}
& \int_{\Ma(q,a)} |v(\beta-a/q)|^2 |L_{2N^2}(\alpha-\beta)| \,d\beta\\ 
\leq & \Big (\int_{\Ma(q,a)} |v(\beta-a/q)|^4 \,d\beta \Big)^{1/2} 
\Big(\int_{\T} |L_{2N^2}(\alpha-\beta)|^2 \,d\beta \Big)^{1/2} \ll N^2.
\end{align*}
This estimate is very wasteful if $a/q$ is `far' away from $\alpha$ and we use it only when 
$\|a/q-\alpha\| \leq 2Q^4/N^2$. But there is only one such pair $a$ and $q$ 
since $\|a_i/q_i-\alpha\|\leq 2Q^4/N^2$ for $i \in \{1,2\}$ implies
that $1/q_1q_2 \leq \|a_1/q_1-a_2/q_2\|\leq 4Q^4/N^2$, which isn't possible due to the restriction
$16Q^{6} \leq N$.

For all the other values of $a$ and $q$ we can do better by using 
the bounds $|L(\alpha)| \leq \|\alpha\|^{-1}$ and $|v(\beta)| \leq N$. Since $\|a/q-\alpha\| > 2Q^4/N^2$
and $\|a/q-\beta\| \leq Q/N^2$ we get $|L_{2N^2}(\alpha-\beta)| \leq N^2/Q^4$. This implies
\begin{align*}
& \int_{\Ma(q,a)} |v(\beta-a/q)|^2 |L_{2N^2}(\alpha-\beta)| \,d\beta \ll \mu(\Ma(q,a)) N^3/Q^4,
\end{align*}
where $\mu$ is the Lebesgue measure.
Summing over $a$ and $q$ and using the bound $\mu(\Ma(q,a)) \leq 2Q/N^2$, we obtain 
the $L^\infty$-estimate for $W_Y$.

For the last part of the $L^\infty$-estimates write
\begin{align*}
\Ua^*(\alpha) = \sum_{Y \in J} \Ua_Y(\alpha)
\end{align*}
as an abbreviation. From $|L_{2N^2}(\alpha)| \leq \min\{2N^2,\|\alpha\|^{-1}\}$
we obtain 
\begin{align*}
\int_{\T} |L_{2N^2}(\alpha)|\,d\alpha \ll \log N.
\end{align*}
We use this observation together with \eqref{eq-Tint} and the `projection identity'
\begin{align*}
W(\alpha) = \int_{\T} U^2(\beta) L_{2N^2}(\alpha-\beta) \,d\beta
\end{align*}
to reduce the $L^{\infty}$-bound for $W'$ to
\begin{align*}
|W'(\alpha)| \leq \int_{\T} |U^2(\beta)-\Ua^*(\beta)| |L_{2N^2}(\alpha-\beta)| \,d\beta
\leq \log N \sup_{\beta \in \T} |U^2(\beta)-\Ua^*(\beta)|.
\end{align*}

Since $\Ua^*$ is the major arc approximation for $U^2$ and zero outside of $\Ma$, we can use 
Theorem 7.2 from \cite{Vau} to estimate the approximation error $\Delta = U(\alpha) - q^{-1}U(q,a)v(\alpha-a/q)$
on the major arcs by $|\Delta| \ll q(1 + |\beta|N^2) \ll Q^2$. 
This is acceptable for our choice of $Q$. We estimate the size of $U$ by 
Weyl's inequality (see \cite[Lemma 2.4]{Vau}) from above by
\begin{align*}
|U(\alpha)| \ll N^{1 + \epsilon} (1/q + 1/N + q/N^2)^{1/2},
\end{align*}
if $\alpha = a/q + \beta$ with $|\beta| \leq 1/q^2$.
For $\alpha \notin \Ma$ we have $q \geq Q$ and the estimate
$|U(\alpha)| \ll N^{1 + \epsilon} Q^{-1/2}$ as long as $Q \leq N$,
which gives the desired estimate on the minor arcs.

The result for $\omega_Y$ is obtained from the decomposition
\begin{align*}
\omega_Y(m) =  \sum_{Y \leq q < 2Y} q^{-2} \sum_{(a;q) = 1} 
U(q,a)^2 e(-am/q) \int_{|\beta| \leq Q/N^2} v(\beta)^2 e(-\beta m) \,d\beta.
\end{align*}
Since this factors into an analytic and an arithmetic part, we can use the Cauchy-Schwarz-inequality
to estimate the $2k$-th moment over each part separately but with $4k$ instead of $2k$.
The analytic part can be dealt with by the Hausdorff-Young inequality for $p = (1-1/(4k))^{-1}$
and contributes
\begin{align*}
\sum_{m \leq 2N^2} \Big| \int_{|\beta| \leq Q/N^2} v(\beta)^2 e(-\beta m) \,d\beta \Big|^{4k}
\ll_k \Big( \int_{\T} |v(\beta)|^{2p} \,d\beta \Big)^{4k/p} \ll N^{2}.
\end{align*}
On the other hand, the arithmetic moment can be bounded by \cite[Lemma 5.3]{Keil}
and we obtain
\begin{align*}
\sum_{m \leq 2N^2} \Big|\sum_{Y \leq q < 2Y} q^{-2} \sum_{(a;q) = 1} 
U(q,a)^2 e(-am/q) \Big|^{4k} \ll_{k,\epsilon} N^2 Y^{\epsilon}
\end{align*}
as lond as $Y^{8k} \leq 2N^2$,
which is satisfied, if $Q$ is a sufficiently small power of $N$.

The result for $\omega'= \omega - \sum_{Y \in J} \omega_Y$ follows from by H\"older's inequality 
and the fact that the number of solutions to $m = x^2 + y^2$ is bounded by $m^{\epsilon}$.
\end{proof}

\begin{proof}[Proof of Theorem \ref{SA-est}.]
We use the proposition above with Theorem \ref{Lp-est}.
We set $W_f = U_g^2$, $N_1 = 2N^2$, $J_0 = \{0\} \cup J$, where $W_0 = W'$, $\omega_0 = \omega'$
and $r = p/2 > 2$.
The first factor in
\begin{align*}
\|W_f\|_r \leq \Big(\sum_{m \leq 2N^2} |f(m)|^2 \omega(m) \Big)^{1/2} 
\Big(\sum_{Y \in J_0} \|W_Y\|_{r}^{(r-2)/r} \|\omega_Y\|^{2/r}_{2r/(r-2)} \Big)^{1/2}
\end{align*} 
is $O(N)$ since $|f(m)| \leq 1$ and $\omega(m) = \#\{n_1,n_2 \leq N: m = n_1^2 + n_2^2\}$.
The bound for the $L^r$-norm of $W_Y$ for $Y \neq 0$ follows from
\begin{align*}
\int_0^1 |W_Y(\alpha)|^r \,d\alpha \leq \sup_{\alpha} |W_Y(\alpha)|^{r-2} \int_0^1 |W_Y(\alpha)|^2 \,d\alpha
\ll (N^2 Y^{-1})^{r-2} N^2
\end{align*} 
by the first part of the proposition. For $Y = 0$ we obtain in the same way
\begin{align*}
\int_0^1 |W_0(\alpha)|^r \,d\alpha \ll (N^{2+\epsilon} Q^{-1})^{r-2} N^{2+\epsilon}.
\end{align*} 
The moment estimates for $\omega_Y$ give
\begin{align*}
\sum_{m \leq 2N^2} |\omega_Y|^{2r/(r-2)} \ll N^2 Y^{\epsilon} \quad \mbox{ and } \quad 
\sum_{m \leq 2N^2} |\omega_0|^{2r/(r-2)} \ll N^{2+\epsilon}.
\end{align*} 
If we parametrize $J$ by $Y = 2^i$, we get for $D \approx \log_2 N$ the upper bound
\begin{align*}
 \sum_{Y \in J_0} \|W_Y\|_{r}^{(r-2)/r} \|\omega_Y\|^{2/r}_{2r/(r-2)}
\leq & \sum_{i \leq D} \Big( (N^2 2^{-i})^{r-2} N^2 \cdot 2^{\epsilon i} N^2 \Big)^{(r-2)/r^2} + \\ 
+&  \Big(  (N^{2+\epsilon}Q^{-1})^{r-2} N^{2+\epsilon} \cdot N^{2+\epsilon} \Big)^{(r-2)/r^2}.
\end{align*}
This is $O(N^{2(r-2)/r})$ if $r > 2$ and $Q$ is a small power of $N$ dependent on $r$.
Combine the square-root of this with the first factor to get
\begin{align*}
\|W_f\|_r \ll N^{(2r-2)/r},
\end{align*}
which gives the result when we take $r$-th powers and substitute $r = p/2$.
\end{proof}

We give a corollary of Theorem \ref{SA-est} for the two-dimensional version.

\begin{lem} \label{Vg4-est}
Let $|g| \leq 1$, then for $p > 4$ we have
\begin{align*}
\sup_{\beta} \int_{\T} |V_g(\alpha,\beta)|^p \,d\alpha \ll N^{p-2}.
\end{align*}
\end{lem}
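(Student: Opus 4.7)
The plan is to reduce Lemma \ref{Vg4-est} to the one-dimensional estimate Theorem \ref{SA-est} by absorbing the linear phase into the weight function. Concretely, for any fixed $\beta \in \T$, set
\begin{align*}
h_\beta(x) = g(x) e(\beta x).
\end{align*}
Since $|g(x)| \leq 1$ we also have $|h_\beta(x)| \leq 1$, and
\begin{align*}
V_g(\alpha,\beta) = \sum_{x \leq N} g(x) e(\alpha x^2 + \beta x) = \sum_{x \leq N} h_\beta(x) e(\alpha x^2) = U_{h_\beta}(\alpha).
\end{align*}

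Next I would apply Theorem \ref{SA-est} to the function $h_\beta$. Since $p > 4$ and the constant in that theorem depends only on $p$ and not on the specific unimodular weight, this yields
\begin{align*}
\int_{\T} |V_g(\alpha,\beta)|^p \, d\alpha = \int_{\T} |U_{h_\beta}(\alpha)|^p \, d\alpha \ll_p N^{p-2},
\end{align*}
with the implied constant independent of $\beta$. Taking the supremum over $\beta \in \T$ on the left then produces the claimed bound.

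There is essentially no obstacle here; the only thing to notice is that Theorem \ref{SA-est} was stated for an arbitrary function $g$ with $|g| \leq 1$, so it applies verbatim to the $\beta$-dependent weight $h_\beta$, and the bound is uniform in $\beta$. Thus Lemma \ref{Vg4-est} is a direct corollary of Theorem \ref{SA-est}, which is the reason the latter was proved in the appendix in the stronger one-dimensional form.
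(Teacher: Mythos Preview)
Your proof is correct and matches the paper's own argument essentially verbatim: absorb the linear phase $e(\beta x)$ into the weight to write $V_g(\alpha,\beta) = U_{h_\beta}(\alpha)$ with $|h_\beta|\le 1$, then apply Theorem~\ref{SA-est} uniformly in $\beta$. Your added remark about the uniformity of the implied constant in $\beta$ is the only extra detail, and it is exactly the point that justifies taking the supremum.
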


\begin{proof}
Write
\begin{align*}
V_{g}(\alpha, \beta) = \sum_{n \leq N} g(n) e(\alpha n^2 + \beta n) = 
\sum_{n \leq N} g(n)e(\beta n) e(\alpha n^2) = U_h(\alpha) 
\end{align*}
for $h(n) = g(n) e(\beta n)$. The estimate now follows from Theorem \ref{SA-est}.
\end{proof}

\end{document}